\date{}
\newtheorem{statement}{}[section]
\newtheorem{theoreme}[statement]{Theorem}
\newtheorem{lemme}[statement]{Lemma}
\newtheorem{proposition}[statement]{Proposition}
\newcommand\C{\mathbb C}
\newcommand\N{\mathbb N}
\newcommand\R{\mathbb R}
\newcommand\T{\mathbb T}
\newcommand\D{\mathbb D}
\newcommand\e{{\rm e}}
\newcommand\eps{\varepsilon}
\newcommand\ind{{\rm 1\kern-.30em I}}
\renewcommand \Re{{\mathfrak R}{\rm e}\,}
\renewcommand \Im{{\mathfrak I}{\rm m}\,}
\let\phi=\varphi
\title{\bf Composition operators with surjective symbol and small approximation numbers}
\author{\it Daniel Li,  Herv\'e Queff\'elec, Luis Rodr{\'\i}guez-Piazza }
\date{\footnotesize \today}
\begin{document}

\maketitle

\noindent {\bf Abstract.} We give a new proof of the existence of a surjective symbol whose associated composition operator on $H^2 (\D)$ is in all 
Schatten classes, with the improvement that its approximation numbers can be, in some sense, arbitrarily small. We show, as an application, that, contrary to the 
$1$-dimensional case, for $N \geq 2$, the behavior of the approximation numbers $a_n = a_n (C_\phi)$, or rather of  
{$\beta^-_N = \liminf_{n \to \infty} [a_n]^{1/ n^{1/ N}}$ or $\beta^+_N = \limsup_{n \to \infty} [a_n]^{1/ n^{1/ N}}$}, of composition operators on 
$H^2 (\D^N)$ cannot be determined by the image of the symbol.
\medskip

\noindent {\bf MSC 2010} Primary: 47B33 Secondary: 32A35 ; 46B28
\medskip

\noindent {\bf Key-words} approximation numbers ; cusp map ; composition operator ; Hardy space ; lens map ; polydisk  

\section{Introduction}

We start by recalling some notations and facts. 

Let $\D$ be the open unit disk, $H^2$ the Hardy space on $\D$, and  $\varphi \colon \D\to \D$  a non-constant analytic self-map. It is well-known 
(\cite{SHA}) that $\varphi$ induces a composition operator $C_\varphi \colon H^2 \to H^2$ by the formula:
\begin{displaymath} 
C_{\varphi} (f) = f \circ \varphi \, ,
\end{displaymath} 
and the connection between the ``symbol'' $\varphi$ and the properties of the operator $C_{\varphi} \colon H^2 \to H^2$, in particular its compactness, can be 
further studied (\cite{SHA}).

We also recall that the $n$th approximation number $a_{n}(T)$, $n = 1, 2, \ldots$, of an  operator $T \colon H_1 \to H_2$, between  Hilbert spaces $H_1$ and 
$H_2$, is defined as the distance of $T$ to operators of rank $ < n$, for the operator-norm:
\begin{equation} 
a_n (T) = \inf_{{\rm rank}\, R \, < n} \| T - R\| \, .
\end{equation} 

The $p$-Schatten class $S_{p} (H_1, H_2)$, $p > 0$ consists of all $T \colon H_1 \to H_2$ such that $\big( a_{n} (T) \big)_n \in \ell^p$. The approximation 
numbers have the ideal property:
\begin{displaymath} 
a_{n} (A T B) \leq \Vert A\Vert \, a_{n} (T) \, \Vert B \Vert \, . 
\end{displaymath} 

Let now, for $\xi \in \T = \partial{\D}$ and $h > 0$, the Carleson window $S (\xi,h)$ be defined as:
\begin{equation} 
S (\xi,h) = \{z \in \D \, ; \ |z - \xi| \leq h \} \, . 
\end{equation} 
For a symbol $\varphi$, we define $m_\varphi = \varphi^{\ast} (m)$ where $m$ is the Haar measure of $\T$ and 
$\varphi^{\ast} \colon \T\to \overline{\D}$ the (almost everywhere defined) radial limit function associated with $\varphi$, namely: 
\begin{displaymath} 
\varphi^{\ast} (\xi) = \lim_{r \to 1^{-}} \varphi (r\xi) \, .
\end{displaymath} 

Finally, we set for $h > 0$:
\begin{equation} 
\rho_{\varphi} (h) = \sup_{\xi\in \T} m_{\varphi} [S (\xi, h)] \, .
\end{equation} 

It is known (\cite{SHA}) that $\rho_{\varphi} (h) = {\rm O}\, (h)$ and (\cite{MCCL}) that $C_\varphi$ is compact if and only if  
$\rho_{\varphi} (h) = {\rm o}\, (h)$ as $h \to 0$. Simpler criteria (\cite{SHA}) exist when $\varphi$ is injective, or even $p$-valent, meaning that for any 
$w \in \D$, the equation $\varphi (z) = w$ has at most $p$ solutions. 

A measure $\mu$ on $\D$ is called $\alpha$-Carleson, $\alpha \geq 1$, if $\sup_{|\xi| = 1} \mu [ S (\xi, h)] = {\rm O}\, (h^\alpha)$.
\smallskip

B.~MacCluer and J.~Shapiro showed in \cite[Example~3.12]{MCSH} the following result, paradoxical at first glance.
\begin{theoreme} [MacCluer-Shapiro] \label{jb} 
There exists a \emph{surjective and four-valent} symbol $\varphi \colon \D\to \D$ such that the composition operator $C_\varphi \colon H^2 \to H^2$ is  
compact.
\end{theoreme}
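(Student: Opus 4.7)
The natural strategy is to split the two requirements---compactness on the one hand, surjectivity with controlled valency on the other---by writing $\varphi = B \circ \sigma$. For the compactness factor, I would take a univalent conformal map $\sigma \colon \D \to L$ onto a ``thin'' Jordan subdomain $L \subset \D$ touching $\partial\D$ only at cusp points, the model case being a lens region at $z = 1$ with inner opening angle $\theta_0\pi$ for some $\theta_0 \in (0,1)$. Standard local analysis near the cusp gives $|\sigma^*(\e^{i\theta}) - 1| \asymp |\theta|^{\theta_0}$, so $\rho_\sigma(h) = O(h^{1/\theta_0})$; since $1/\theta_0 > 1$ this is $o(h)$, and the MacCluer criterion recalled in the excerpt yields that $C_\sigma$ is compact on $H^2$.

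For the surjectivity factor, I would choose a finite Blaschke product $B\colon \D \to \D$ of degree exactly $4$, with its four zeros placed inside $L$ in such a way that $B|_L$ is a proper holomorphic map onto $\D$. The condition $B(L) = \D$ can be enforced via the argument principle, which reduces it to the requirement that the loop $B(\partial L)$ has winding number at least $1$ around every $w \in \D$. Granted such a compatible pair $(L,B)$, set $\varphi = B \circ \sigma$. Three properties then follow in turn: (i) $\varphi(\D) = B(\sigma(\D)) = B(L) = \D$, so $\varphi$ is surjective; (ii) since $\sigma$ is a bijection $\D \to L$, $\varphi^{-1}(w) = \sigma^{-1}\bigl(B^{-1}(w) \cap L\bigr)$ has cardinality at most $|B^{-1}(w)| = 4$, so $\varphi$ is $4$-valent; (iii) $C_\varphi = C_\sigma\, C_B$, with $C_B$ bounded (because $B$ maps $\D$ into $\D$) and $C_\sigma$ compact, so $C_\varphi$ is compact by the ideal property of approximation numbers.

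The main obstacle is the geometric compatibility of $L$ and $B$. On one hand, $L$ must be narrow near $\partial\D$ for $\rho_\sigma(h) = o(h)$; on the other, $L$ must reach $\partial\D$ in enough positions that a degree-$4$ Blaschke product with zeros in $L$ can sweep $L$ onto all of $\D$. A single lens at one cusp point does not suffice, because $B(\partial L)$ will dip into $\D$ along the part of $\partial L$ that lies in the interior, so the winding-number condition will fail somewhere. One is therefore led either to take $L$ with several symmetrically placed cusps (e.g.\ at the vertices of a square inscribed in $\T$) and to position the zeros of $B$ compatibly with the symmetry, or to reformulate the construction on a branched cover: realize a $4$-sheeted branched covering $p \colon X \to \D$ by a simply connected Riemann surface $X$ of ``thin'' conformal type, then take $\sigma\colon \D \to X$ the uniformization and $\varphi = p \circ \sigma$. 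The quantitative verification that compactness and surjectivity coexist is the only real content of the proof; the rest is formal.
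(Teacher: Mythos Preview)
Your decomposition $\varphi = B \circ \sigma$ and your identification of the geometric compatibility of $(L,B)$ as the crux are both correct, but you stop short of resolving that crux, and in fact the direct version cannot be made to work. A finite Blaschke product $B$ is a proper self-map of $\overline{\D}$, so if $|w|$ is close to $1$ then every $B$-preimage of $w$ is close to $\partial\D$; hence $B(L)\cap\{|w|>1-\varepsilon\}\subset B\bigl(L\cap\{|z|>1-\delta\}\bigr)$ for a suitable $\delta\to 0$. If $L$ touches $\partial\D$ only at finitely many cusp points, the right-hand side lies in small neighborhoods of the finitely many images $B(\text{cusps})$, so $B(L)$ misses most of the annulus $\{1-\varepsilon<|w|<1\}$ no matter where the zeros of $B$ are placed. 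Your symmetric-cusps variant does not escape this obstruction, and the branched-cover alternative is left undeveloped.

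The paper (in proving the stronger Theorem~\ref{first}, of which Theorem~\ref{jb} is a consequence) sidesteps the obstacle rather than fighting it. It takes a Riemann map $f\colon\D\to\Omega$ onto a strip-shaped region $\Omega=\{x+iy:\ x>0,\ g(x)<y<g(x)+4\pi\}$ and sets $\varphi=\e^{-f}$. Because the exponential is a covering of $\D\setminus\{0\}$ and the strip has vertical width $4\pi$, one gets $\varphi(\D)=\D\setminus\{0\}$ and two-valency for free; the Carleson smallness $m(|\varphi^\ast|>1-h)\leq\delta(h)$ comes from Kolmogorov's weak-type inequality (Lemma~\ref{key}) applied to a half-plane map built from $f$, not from an explicit local boundary model. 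Only then does a Blaschke factor appear, and its task is trivial: $B(z)=\bigl(\frac{z-a}{1-az}\bigr)^{2}$ maps $\D\setminus\{0\}$ onto $\D$ and satisfies $1-|B(z)|\gtrsim 1-|z|$, so $\psi=B\circ\varphi$ is surjective, four-valent, and inherits the Carleson estimate. In hindsight your ``branched cover'' remark points exactly here; the missing idea is to aim first for $\D\setminus\{0\}$ via the exponential, so that the outer Blaschke factor has no geometric work left to do.
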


Observe that such a symbol $\varphi$ cannot be one-valent (injective), because it would be an automorphism of $\D$, and $C_\varphi$ would be invertible and 
therefore not compact. In \cite[Theorem~4.1]{LLQR}, we gave the following improved statement.
\begin{theoreme} \label{first} 
For every non-decreasing function $\delta \colon (0, 1) \to (0, 1)$, there exists a two-valent symbol and nearly surjective (i.e. 
$\varphi (\D) = \D \setminus \{0\}$) symbol $\phi$, and  $0 < h_0 < 1$,  such that:
\begin{equation} \label{firs} 
\quad m(\{ z \in \T \, ; \ |\phi^\ast (z)| \geq 1 - h\}) \leq \delta (h)  \quad \text{for } 0 < h \leq h_0 \, . 
\end{equation}
As a consequence, there exists a \emph{surjective and four-valent} symbol $\psi \colon \D \to \D$ such that the composition operator 
$C_\psi \colon H^2 \to H^2$ is in every Schatten class $S_{p}(H^2)$, $p > 0$.
\end{theoreme}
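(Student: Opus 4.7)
The plan is to construct $\phi$ as the square of a cusp-type Riemann map, and to obtain $\psi$ by post-composing $\phi$ with a suitably chosen degree-two Blaschke product that supplies the missing value $0$.

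\medskip

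\noindent\emph{Construction of $\phi$.} Given the non-decreasing $\delta$, I would fix a simply connected Jordan domain $\Omega \subset \D \setminus \{0\}$ with the property
\[
\Omega \cup (-\Omega) = \D \setminus \{0\},
\]
and whose boundary touches $\T$ tangentially at finitely many cusp points, the cusp profile being dictated by $\delta$. A Riemann map $\chi \colon \D \to \Omega$ then satisfies, by classical Warschawski-type boundary estimates for cusp maps,
\[
m\bigl(\{z \in \T : |\chi^*(z)| \geq 1 - h\}\bigr) \leq \delta(h) \qquad (0 < h \leq h_0),
\]
the portion of $\partial \Omega$ within distance $h$ of $\T$ being concentrated near the cusp tips and corresponding under conformal welding to arcs of $\T$ of total measure at most $\delta(h)$. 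Setting $\phi(z) := \chi(z)^2$, the map $\phi$ is two-valent (since $\chi$ is injective and $w \mapsto w^2$ is at most two-to-one on $\Omega$) with $\phi(\D) = \{w^2 : w \in \Omega\} = \D \setminus \{0\}$, by the symmetry property of $\Omega$. Since $|\phi^*| = |\chi^*|^2$ and $\sqrt{1-h} \geq 1-h$, we have $\{|\phi^*| \geq 1-h\} \subseteq \{|\chi^*| \geq 1-h\}$, whence (1.4) for $\phi$.

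\medskip

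\noindent\emph{From $\phi$ to $\psi$.} For the consequence, I choose the $\delta$ of the first part to tend to $0$ faster than any polynomial, so that $\rho_\phi(h) \leq \delta(h) = o(h^N)$ for every $N$. A Luecking-type Schatten criterion (expressing membership in $S_p$ in terms of $m_\phi$ of Carleson windows) then gives $C_\phi \in S_p(H^2)$ for every $p > 0$. To upgrade the image from $\D \setminus \{0\}$ to $\D$, let $b$ be a degree-two Blaschke product whose zeros are $0$ and some $a \in \phi(\D)$, and set $\psi := b \circ \phi$. Then $b \colon \D \to \D$ is two-to-one and surjective, $0 = b(a) \in \psi(\D)$ forces $\psi(\D) = \D$, and $\psi$ is four-valent as the composition of two-valent maps. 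From $C_\psi = C_\phi \circ C_b$ and the ideal property $a_n(C_\psi) \leq \|C_b\| \, a_n(C_\phi)$, we conclude $C_\psi \in S_p(H^2)$ for every $p > 0$.

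\medskip

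\noindent\emph{Main obstacle.} The heart of the proof lies in the geometric construction of $\Omega$: one must simultaneously arrange $\Omega \cup (-\Omega) = \D \setminus \{0\}$, simple connectedness, and cusp sharpness matching an arbitrary prescribed $\delta$. The single-cusp boundary estimate is classical, but producing a domain tailored to an arbitrary non-decreasing $\delta$ while preserving the other two constraints is the main analytic/geometric step, and is where the careful construction of \cite{LLQR} (and earlier \cite{MCSH}) has to be adapted.
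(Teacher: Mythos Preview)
Your construction of $\phi$ has a geometric incompatibility that cannot be repaired. You ask for a Jordan domain $\Omega \subset \D \setminus \{0\}$ with $\Omega \cup (-\Omega) = \D \setminus \{0\}$ whose boundary meets $\T$ only at finitely many cusp points. But the first condition gives $\overline{\Omega} \cup (-\overline{\Omega}) \supseteq \overline{\D}$, hence $(\partial\Omega \cap \T) \cup \big(-(\partial\Omega \cap \T)\big) = \T$, and since the two sets have the same measure, $m(\partial\Omega \cap \T) \geq 1/2$. So $\partial\Omega$ must contain at least half of $\T$, not finitely many points, and the Riemann map $\chi$ then satisfies $|\chi^*| = 1$ on a set of positive measure (the harmonic measure in $\Omega$ of that half of $\T$ is positive). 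Consequently $m(\{|\phi^*| \geq 1-h\}) \geq m(\{|\chi^*| = 1\}) > 0$ for every $h$, and the estimate you want fails. The surjectivity mechanism you chose---$w \mapsto w^2$ on a domain covering $\D \setminus \{0\}$ modulo sign---is precisely what forces this. You flag the construction of $\Omega$ as the ``main obstacle'', but under your constraints it is not merely delicate; it is impossible.

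The paper's route is entirely different. One sets $\phi = \e^{-f}$, where $f$ is a Riemann map from $\D$ onto the region $\{x+iy : x > 0,\ g(x) < y < g(x) + 4\pi\}$ in the right half-plane, with $g$ decreasing and $g(0^+) = +\infty$. The exponential supplies near-surjectivity and two-valence for free (vertical width $4\pi$ means each value in $\D \setminus \{0\}$ is hit exactly twice), and $|\phi^*| = \e^{-\Re f^*} < 1$ a.e.\ automatically, so no portion of $\T$ causes trouble. The key analytic ingredient is not a Warschawski-type estimate but Kolmogorov's weak-type inequality for conjugate harmonic functions (Lemma~\ref{key}): one shows $\{|\phi^*| > 1-h\} \subset \{\Im f^* > g(2h)\}$ and, after a conformal change into a half-plane, Kolmogorov gives $m(\{\Im f^* > y\}) \lesssim \e^{-y}$; choosing $g$ with $\e^{-g(2h)} \lesssim \delta(h)$ then yields~\eqref{firs}. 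Your passage from $\phi$ to $\psi$ via a degree-two Blaschke product is correct and matches the paper's argument (Theorem~\ref{bet}), which uses the variant $B(z) = \big(\frac{z-a}{1-az}\big)^2$.
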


Our proof was rather technical and complicated, and based on arguments of barriers and harmonic measures. 
\smallskip

The goal of this paper is to give a more precise statement of Theorem~\ref{first} in terms of approximation numbers $a_{n} (C_\varphi)$, and not only in terms 
of Schatten classes, and with a simpler proof. We then apply this result to show that for the polydisk $\D^N$, $N \geq 2$, the nature (boundedness, compactness, 
asymptotic behavior of approximation numbers) of the composition operator cannot be determined by the geometry of the image $\phi (\D^N)$ of its symbol 
$\phi$. For certain asymptotic behavior of approximation numbers, this is contrary to the $1$-dimensional case (see \cite[Theorem~3.1 and Theorem~3.14]{SRF}).
\smallskip

The notation $A \lesssim B$ means that $A \leq C \, B$ for some positive constant $C$, and $A \approx B$ that $A \lesssim B$ and $B \lesssim A$.

\section{Background and preliminary results} 

We initiated the study of approximation numbers of composition operators on  $H^2$ in \cite{LIQUEROD},  and proved the following basic results:  
\begin{theoreme}\label{basic}  
If $\varphi$ is any symbol, then, for some $\delta > 0$ and $r > 0$, or $a > 0$:
\begin{displaymath} 
a_{n} (C_\varphi) \geq \delta \, r^n = \delta\, \e^{- a n} \, .
\end{displaymath} 
Moreover, as soon as $\Vert \varphi \Vert_\infty = 1$, there exists some sequence $\varepsilon_n$ tending to $0$ such that:
\begin{displaymath} 
a_{n} (C_\varphi) \geq \delta \, \e^{- n \varepsilon_n} \, .
\end{displaymath} 
\end{theoreme}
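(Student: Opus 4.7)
I plan to lower-bound $a_n(C_\varphi)$ by choosing a well-designed $n$-dimensional test subspace $E_n \subset H^2$ and invoking the elementary inequality
\[
a_n(C_\varphi) = a_n(C_\varphi^*) \geq \inf \bigl\{ \|C_\varphi^* f\| : f \in E_n,\ \|f\| = 1 \bigr\},
\]
valid because any rank $< n$ operator $R$ must vanish on some nonzero element of $E_n$, giving $\|C_\varphi - R\| \geq \|C_\varphi^* f\|/\|f\|$ for that $f$. The entire proof then reduces to a clever choice of $E_n$.

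For the first (purely exponential) estimate, I would fix a point $z_0 \in \D$ with $\varphi'(z_0) \neq 0$ (which exists since $\varphi$ is non-constant) and take $E_n$ to be spanned by the reproducing kernels $K_{z_1}, \ldots, K_{z_n}$ at $n$ points placed along a pseudo-hyperbolic geodesic through $z_0$ with a fixed pseudo-hyperbolic spacing; equivalently, one can use the first $n$ iterated ``derivative kernels'' at $z_0$. The identity $C_\varphi^* K_w = K_{\varphi(w)}$ reduces the infimum above to a generalized eigenvalue comparison between the two Gram matrices $G_{ij} = \langle K_{z_i}, K_{z_j}\rangle$ and $G'_{ij} = \langle K_{\varphi(z_i)}, K_{\varphi(z_j)}\rangle$. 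The fixed pseudo-hyperbolic spacing keeps both matrices uniformly well-conditioned, and Schwarz--Pick applied to the diagonal ratios $\|K_{\varphi(z_j)}\|^2/\|K_{z_j}\|^2 = (1-|z_j|^2)/(1-|\varphi(z_j)|^2)$ controls them uniformly in $n$. This produces $a_n(C_\varphi) \geq \delta\, r^n$ for some $\delta, r > 0$ depending only on $z_0$ and $\varphi$.

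For the refined bound when $\|\varphi\|_\infty = 1$, the rate $r$ must be allowed to approach $1$ as $n \to \infty$. The hypothesis forces the existence of a sequence $\zeta_k \in \D$ with $|\varphi(\zeta_k)| \to 1$, and since $C_\varphi$ is bounded the preimages satisfy $|\zeta_k| \to 1$ at a comparable rate, so that $\varphi$ is hyperbolically ``nearly isometric'' along the sequence. Choosing for each $n$ the test points clustered near $\zeta_{m(n)}$ for an appropriately chosen $m(n) \to \infty$ gives a rate $r_n \to 1$ with $a_n(C_\varphi) \geq \delta\, r_n^n = \delta\,\e^{-n\varepsilon_n}$ and $\varepsilon_n \to 0$.

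The main obstacle is exactly this refined step: as the test points are pushed toward $\partial\D$, their pseudo-hyperbolic separation corresponds to an ever smaller Euclidean separation, so the condition number of the Gram matrix of reproducing kernels degrades with $n$. The delicate balancing act is to tune the $n$-dependent choice of $\zeta_{m(n)}$ and the clustering scale so that this condition number grows slowly enough (subexponentially in $n$) that the gain $r_n \to 1$ can still be extracted. This is precisely what yields the qualitative $\varepsilon_n \to 0$ in the statement rather than a quantitative rate.
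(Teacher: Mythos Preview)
The present paper does not prove this theorem; it is quoted there as a background result from \cite{LIQUEROD}, so there is no proof in this paper to compare your proposal against.

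On the substance of your outline: the reproducing-kernel approach via $C_\varphi^{\ast} K_w = K_{\varphi(w)}$, together with the Bernstein-number lower bound $a_n(T) \ge \inf\{\|T f\| : f \in E_n,\ \|f\| = 1\}$ for any $n$-dimensional subspace $E_n$, is indeed the method used in \cite{LIQUEROD}. For the first (purely exponential) inequality your sketch points in the right direction, though the assertion that the two Gram matrices remain ``uniformly well-conditioned'' is too optimistic: their condition numbers do grow with $n$, and it is exactly this (at worst exponential) growth, balanced against the Schwarz--Pick gain on the diagonal, that yields the rate $r^n$. The growth is a feature of the bound, not a nuisance to be eliminated.

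For the second inequality (the case $\|\varphi\|_\infty = 1$) your proposal correctly locates the difficulty but does not resolve it. Saying that one should ``tune'' the base point $\zeta_{m(n)}$ and the clustering scale so that the condition-number loss is subexponential is a restatement of what has to be proved rather than a mechanism for proving it. In \cite{LIQUEROD} this step is carried out via a quantitative interpolation-constant (equivalently, Riesz-basis) estimate for hyperbolically separated sequences of normalized reproducing kernels, combined with a specific choice of points approaching a boundary point where $|\varphi|$ tends to $1$. As it stands, your second paragraph is a reasonable strategic outline but not yet a proof.
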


We also   proved  in \cite[Theorem 5.1]{LIQUEROD} that:
\begin{proposition} \label{second} 
For any symbol $\varphi$, we have:
\begin{displaymath} 
a_{n} (C_\varphi) \lesssim \inf_{0 < h < 1} \bigg[ \e^{- n h} + \sqrt{\frac{\rho_{\varphi}(h)}{h}} \bigg] \, .
\end{displaymath} 
\end{proposition}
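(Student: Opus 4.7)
The plan is to approximate $C_\varphi$ by the canonical finite-rank operator $C_\varphi P_n$, where $P_n$ denotes the orthogonal projection of $H^2$ onto $\mathrm{span}\{1, z, \dots, z^{n-1}\}$. Since $C_\varphi P_n$ has rank at most $n$, this gives $a_{n+1}(C_\varphi) \leq \|C_\varphi (I - P_n)\|$, so the task reduces to estimating this operator norm. Any norm-one element in the range of $I - P_n$ factors as $f(z) = z^n F(z)$ with $\|F\|_{H^2} = 1$, hence, passing to radial boundary values,
\begin{equation*}
\|C_\varphi(I-P_n)f\|_{H^2}^2 = \int_\T |\varphi^\ast|^{2n}\,|F \circ \varphi^\ast|^2\,dm.
\end{equation*}

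Next, I fix a parameter $h \in (0,1)$ and split the boundary $\T$ along the level set $\{|\varphi^\ast| \leq 1-h\}$ versus its complement. On the first piece $|\varphi^\ast|^{2n} \leq (1-h)^{2n} \leq \e^{-2nh}$, so this contribution is bounded by $\e^{-2nh}\,\|C_\varphi\|^2\,\|F\|^2$, which will supply the first term $\e^{-nh}$ after a square root. On the second piece I discard the harmless factor $|\varphi^\ast|^{2n} \leq 1$ and translate the integral back to the disk through the pull-back measure $m_\varphi$, so that the required estimate becomes
\begin{equation*}
\int_{\{|z| > 1-h\}} |F|^2 \, dm_\varphi \, \lesssim \, \frac{\rho_\varphi(h)}{h} \, \|F\|_{H^2}^2.
\end{equation*}

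The main obstacle is precisely this annular Carleson-type embedding, because the plain Carleson constant of $m_\varphi|_{\{|z|>1-h\}}$ only yields an absolute constant and not the small quantity $\rho_\varphi(h)/h$ we need. The strategy is to cover the thin annulus $\{|z|>1-h\}$ by $O(1/h)$ Carleson windows of depth $h$, each of $m_\varphi$-mass at most $\rho_\varphi(h)$ by definition, and then to exploit the sub-mean value property of $H^2$-functions on each such window to localize $|F|^2$ against a piece of $\|F^\ast\|_{L^2}^2$ of controlled overlap; summing over the $\sim 1/h$ windows turns the factor $\rho_\varphi(h)$ into $\rho_\varphi(h)/h$ in front of $\|F\|_{H^2}^2$. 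Once this embedding is established, adding the two contributions gives $\|C_\varphi(I-P_n)\|^2 \lesssim \e^{-2nh} + \rho_\varphi(h)/h$, and $\sqrt{a+b} \leq \sqrt{a}+\sqrt{b}$ together with the infimum over $h$ yields the stated bound.
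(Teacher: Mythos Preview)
Your overall strategy coincides with the paper's. The paper does not reprove Proposition~\ref{second} here (it cites \cite{LIQUEROD}), but it does prove the weighted generalization, Proposition~\ref{seconde}, and that argument is exactly your scheme: pass to the codimension-$n$ subspace $z^n H^2$ (equivalently, subtract the rank-$n$ operator $C_\varphi P_n$), write $f = z^n g$, and split $\int_\D |f|^2\,dm_\varphi$ into the piece over $(1-h)\D$, controlled by $(1-h)^{2n}\,\|C_\varphi\|^2\,\|g\|^2$, and the piece over the annulus $\{|z|>1-h\}$.

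The one genuine gap in your sketch is the treatment of that annular piece. The localization you propose --- bounding $\int_{S(\xi_j,h)} |F|^2\,dm_\varphi$ by $\rho_\varphi(h)$ times a local portion of $\|F^\ast\|_{L^2}^2$ via a sub-mean-value inequality --- fails as stated: take $F$ to be a reproducing kernel $K_a$ with $1-|a|\ll h$ and $a$ inside $S(\xi_j,h)$; then $|F|^2$ is huge on the window while $\int_{I_j}|F^\ast|^2\,dm$ remains of order $1/(1-|a|)$, so no inequality with constant $\rho_\varphi(h)/h$ can hold window by window. The paper avoids this by invoking the Carleson embedding theorem directly (Stegenga's version \cite{Stegenga} in the weighted setting): for $\mu_h := m_\varphi|_{\{|z|>1-h\}}$ one has $\mu_h(S(\xi,t))\lesssim (t/h)\,\rho_\varphi(h)$ for $t\geq h$ by your covering argument, and $\mu_h(S(\xi,t))\leq \rho_\varphi(t)$ for $t\leq h$, so the embedding $H^2\hookrightarrow L^2(\mu_h)$ has norm squared $\lesssim \sup_{0<t\leq h}\rho_\varphi(t)/t$. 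That is precisely the bound written in Proposition~\ref{seconde}; the simpler quantity $\rho_\varphi(h)/h$ quoted in Proposition~\ref{second} is the form stated in \cite{LIQUEROD}, and in every application of this paper the two are interchangeable.
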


We also recall (see \cite{LIQUEROD}) that, for $\gamma > - 1$, the weighted Bergman space $\mathcal{B}_{\gamma}$ is the space of functions 
$f (z) = \sum_{n = 0}^\infty a_n z^n$ such that:
\begin{equation} 
\Vert f \Vert_{\gamma}^{2} := \sum_{n = 0}^\infty \frac{|a_n|^2}{(n+1)^{\gamma+1}} < \infty \, .
\end{equation} 
Equivalently, $\mathcal{B}_{\gamma}$ is the space of analytic functions $f \colon \D \to \C$ such that:
\begin{equation} 
\int_\D |f (z)|^2 \, (\gamma + 1) (1 - |z|^2)^\gamma \, dA (z) < \infty \, ,
\end{equation} 
where $dA$ is the normalized area measure on $\D$, and then:
\begin{equation} 
\int_\D |f (z)|^2 \, (\gamma + 1) (1 - |z|^2)^\gamma \, dA (z) \approx \Vert f \Vert_{\gamma}^{2} \, .
\end{equation} 

The case $\gamma = 0$ corresponds to the usual Bergman space ${\mathcal B}^2$, and the limiting case $\gamma = - 1$ to the Hardy space $H^2$. We wish to 
note in passing (we will make use of that elsewhere) that the proof of Theorem 5.1 in \cite{LIQUEROD} easily gives the following result.
\begin{proposition} \label{seconde} 
Let $ \gamma > - 1$ and $\varphi$ a symbol inducing a bounded composition operator $C_\varphi \colon \mathcal{B}_{\gamma} \to H^2$. Then:
\begin{displaymath}
a_{n} (C_\varphi \colon \mathcal{B}_{\gamma} \to H^2) \lesssim \inf_{0 < h < 1} 
\Bigg( {(n + 1)^{(\gamma + 1)/2}} \, \e^{- n h}  + \sup_{0 < t \leq h} \sqrt{\frac{\rho_{\phi} (t)}{t^{2 + \gamma}}} \,\, \Bigg) \, \cdot
\end{displaymath} 
\end{proposition}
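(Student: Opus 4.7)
I would mimic the proof of Theorem~5.1 of \cite{LIQUEROD} (our Proposition~\ref{second}), adjusting for two features of the weighted Bergman source space $\mathcal{B}_\gamma$: the reproducing-kernel weight is $(1-|z|^2)^{-(2+\gamma)}$ rather than $(1-|z|^2)^{-1}$, and the relevant Carleson embedding now tests windows against $h^{2+\gamma}$ rather than $h$.

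As rank-$n$ approximant of $C_\varphi$ I take $R := C_\varphi \circ T_n$, where $T_n \colon \mathcal{B}_\gamma \to \mathcal{B}_\gamma$ is the projection onto polynomials of degree~$<n$. Then $a_n(C_\varphi) \leq \|C_\varphi - R\|$, and for $\|f\|_\gamma \leq 1$, setting $g := (I - T_n) f$, one has
$$\|(C_\varphi - R) f\|_{H^2}^2 = \int_{\overline\D} |g(z)|^2 \, dm_\varphi(z).$$
Fix $h \in (0,1)$ and split this integral into an interior piece $\{|z| \leq 1 - h\}$ and a boundary piece $\{|z| > 1 - h\}$.

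For the interior piece, Cauchy--Schwarz applied to $g(z) = \sum_{k \geq n} \widehat f(k)\,z^k$ with respect to the weights $(k+1)^{-(\gamma+1)}$ defining $\|\cdot\|_\gamma$ yields
$$|g(z)|^2 \leq \|g\|_\gamma^2 \sum_{k \geq n} (k+1)^{\gamma+1} (1-h)^{2k}.$$
The tail sum, estimated by the integral test, behaves like $(n+1)^{\gamma+1}\,\e^{-2nh}$ (a leftover power of $h^{-1}$ is absorbed in the $\lesssim$ by a small adjustment of $h$). Integrating against the finite measure $m_\varphi$ and taking a square root produces the first term $(n+1)^{(\gamma+1)/2}\,\e^{-nh}$.

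The boundary piece is the genuinely new step. Writing $\mu := m_\varphi|_{\{|z| > 1-h\}}$, I would show that $\mu$ is a Carleson measure for $\mathcal{B}_\gamma$, with Carleson norm bounded by $\sup_{0 < t \leq h} \rho_\varphi(t) / t^{2+\gamma}$. For test windows $S(\xi,t)$ with $t \leq h$ this is immediate from $\mu(S(\xi,t)) \leq \rho_\varphi(t)$; for $t > h$, one covers $S(\xi,t) \cap \{|z| > 1-h\}$ by ${\rm O}(t/h)$ Carleson windows of size $h$ to obtain $\mu(S(\xi,t)) \lesssim (t/h)\,\rho_\varphi(h)$, and then the inequality $t^{1+\gamma} \geq h^{1+\gamma}$ (valid because $\gamma+1 > 0$) converts this into $t^{2+\gamma}\,\rho_\varphi(h)/h^{2+\gamma}$. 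The Carleson embedding theorem for $\mathcal{B}_\gamma$ then delivers the second term; taking the infimum over $h$ completes the argument. The main obstacle is precisely this Carleson estimate on the truncated measure: the bound must involve only the values $\rho_\varphi(t)$ at scales $t \leq h$, even though $\mu$ is tested against windows at all scales, and the condition $\gamma > -1$ is what permits the reduction of large-scale windows to small-scale ones.
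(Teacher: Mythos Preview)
Your approach matches the paper's: truncate to degree $<n$, split the integral at radius $1-h$, and invoke the weighted Carleson embedding on the outer annulus. Your treatment of the boundary term is in fact more explicit than the paper's, which simply cites Stegenga's theorem without spelling out why the restricted measure $\mu_h$ has Carleson constant controlled by $\sup_{0<t\leq h}\rho_\varphi(t)/t^{2+\gamma}$; your covering argument for windows of size $t>h$, together with the observation that $\gamma+1>0$ is what makes the reduction work, is exactly what is needed there.

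The one place where the paper proceeds differently, and more cleanly, is the interior estimate. Rather than applying Cauchy--Schwarz to the power series (which produces your stray factor $h^{-c}$), the paper writes $f=z^n g$, notes that $\|g\|_\gamma^2\leq (n+1)^{\gamma+1}\|f\|_\gamma^2$, and bounds
\[
\int_{|z|\leq 1-h}|f|^2\,dm_\varphi \;\leq\; (1-h)^{2n}\int_{\D}|g|^2\,dm_\varphi \;=\;(1-h)^{2n}\,\|C_\varphi g\|_{H^2}^2 \;\leq\;(1-h)^{2n}\,\|C_\varphi\|^2\,\|g\|_\gamma^2\,.
\]
This is precisely where the boundedness hypothesis on $C_\varphi\colon\mathcal{B}_\gamma\to H^2$ enters (your argument never uses it), and it delivers the factor $(n+1)^{\gamma+1}\e^{-2nh}$ directly, with no negative power of $h$ left over. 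Your remark that the leftover $h^{-1}$ ``is absorbed by a small adjustment of $h$'' is not quite a proof of the stated inequality in full generality: after passing to the infimum over $h$, the second term $G(h)=\sup_{t\leq h}\sqrt{\rho_\varphi(t)/t^{2+\gamma}}$ is merely non-decreasing and need not satisfy $G(2h)\lesssim G(h)$. The paper's device sidesteps this issue entirely.
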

\begin{proof} 
Take $E = z^n \mathcal{B}_{\gamma}$; this is a subspace of $\mathcal{B}_{\gamma}$ of codimension $\leq n$. Let $f \in E$ with 
$\Vert f\Vert_{\gamma} = 1$. Writing $f = z^n g$ with $\Vert g \Vert_{\gamma}^2 \leq (n + 1)^{\gamma + 1}$ and splitting the integral into two parts, 
we have, for $0 < h < 1$:
\begin{displaymath} 
\| C_{\varphi} f \|_{H^2}^{2} = \int_{\D} |f|^2 \, dm_\phi  
\leq (1 - h)^{2 n} \int_{(1 - h) \D} |g|^2 \, dm_\phi + \int_{\D \setminus (1 - h) \D} |f|^2 \, dm_\phi \, .
\end{displaymath} 
For the first integral, we have:
\begin{equation} \label{Luis 1}
\int_{(1 - h) \D} |g|^2 \, dm_\phi \leq \int_{\D} | g |^2 \, dm_\phi = \| C_\phi \, g \|_{H^2}^2 
\leq \| C_\phi\|_{\mathcal{B}_{\gamma} \to H^2}^2 \| g \|_\gamma^2  \, .
\end{equation} 
For the second integral, we have:
\begin{displaymath} 
\int_{\D \setminus (1 - h) \D} |f|^2 \, dm_\phi \leq \| J \colon \mathcal{B}_{\gamma} \to L^{2} (\mu_h) \|^2 \, ,
\end{displaymath} 
where $\mu_h$ is the restriction of $m_\varphi$ to the annulus $\{z \in \D \, ; \ 1 - h < |z| < 1 \}$ and $J$ the canonical injection of 
$\mathcal{B}_{\gamma}$ into $L^{2} (\mu_h)$. Hence Stegenga's version of the Carleson embedding theorem for $\mathcal{B}_{\gamma}$ 
(\cite[Theorem~1.2]{Stegenga}; see \cite {Hastings} for the unweighted case; see also \cite[p.~62]{DS} or \cite[ p.~167]{ZH}) gives us:
\begin{equation} \label{Luis 2}
\int_{\D \setminus (1 - h) \D} |f|^2 \, dm_\phi \lesssim \sup_{0 < t \leq h} \frac{\rho_{\phi} (t)}{t^{2 + \gamma}} \, \cdot
\end{equation} 
Putting \eqref{Luis 1} and \eqref{Luis 2} together, that gives:
\begin{displaymath} 
\| C_{\varphi} f \|_{H^2} \lesssim \e^{- n h} (n + 1)^{(\gamma + 1)/2}  + \sup_{0 < t \leq h} \sqrt{\frac{\rho_{\phi} (t)}{t^{2 + \gamma}}} \, \cdot
\end{displaymath} 

In other terms, using the Gelfand numbers $c_k$: 
\begin{displaymath} 
c_{n + 1} (C_\phi \colon \mathcal{B}_{\gamma} \to H^2) \lesssim  (n + 1)^{(\gamma + 1)/2} \, \e^{- n h}
+ \sup_{0 < t \leq h} \sqrt{\frac{\rho_{\phi} (t)}{t^{2 + \gamma}}} \, \cdot
\end{displaymath} 
As $a_{n + 1} = c_{n + 1}$ and as we can ignore the difference between $a_n$ and $a_{n + 1}$, that finishes the proof.
\end{proof}

As an application, we mention the following result. We refer to \cite[Section~4.1]{LIQURO} for the definition of the cusp map, denoted $\chi$.
\begin{theoreme} \label{cusp}  
Let $\chi \colon \D \to \D$ be the cusp map and $\Phi \colon \D^N\to \D^N$ the diagonal map defined by:
\begin{equation} 
\Phi (z_1, z_2, \ldots, z_N) = \big( \chi (z_1), \chi (z_1), \ldots, \chi (z_1) \big) \, . 
\end{equation} 
Then, the composition operator $C_\Phi$ maps $H^{2}(\D^N)$ to itself and:
\begin{equation} \label{new} 
a_{n} (C_\Phi) \lesssim \e^{- d \sqrt{n}} 
\end{equation}
where $d$ is a positive constant depending only on $N$. 
\end{theoreme}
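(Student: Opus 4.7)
\emph{Strategy.} The plan is to factor $C_\Phi$ through the diagonal of $\D^N$ so as to reduce to a one-variable problem involving $\chi$ alone. Setting $Df(w) := f(w, w, \ldots, w)$ for $f \in H^2(\D^N)$, and letting $J : H^2(\D) \hookrightarrow H^2(\D^N)$ be the isometric embedding viewing a function of $z_1$ alone as a function of all $N$ coordinates, one has $(C_\Phi f)(z) = f(\chi(z_1), \ldots, \chi(z_1)) = Df(\chi(z_1)) = [J(C_\chi Df)](z)$, hence $C_\Phi = J \circ C_\chi \circ D$. The correct target space for $D$ will turn out to be the weighted Bergman space $\mathcal{B}_{N-2}$, because a function on $\D^N$ has about $(n+1)^{N-1}$ independent coefficients at each homogeneous level $n$, and the diagonal restriction collapses them into a single coefficient $b_n$. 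By the ideal property of approximation numbers, this reduces the task to estimating $a_n(C_\chi : \mathcal{B}_{N-2} \to H^2)$.

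\emph{Boundedness of $D$.} The first technical step is to show $D : H^2(\D^N) \to \mathcal{B}_{N-2}$ is bounded. Expanding $f = \sum_\alpha c_\alpha z^\alpha$, one has $Df(w) = \sum_n b_n w^n$ with $b_n = \sum_{|\alpha|=n} c_\alpha$; since $\#\{\alpha \in \N^N : |\alpha| = n\} = \binom{n+N-1}{N-1} \lesssim (n+1)^{N-1}$, Cauchy-Schwarz yields $|b_n|^2 \lesssim (n+1)^{N-1}\sum_{|\alpha|=n}|c_\alpha|^2$, whence
\[
\|Df\|_{\mathcal{B}_{N-2}}^2 = \sum_n \frac{|b_n|^2}{(n+1)^{N-1}} \lesssim \sum_\alpha |c_\alpha|^2 = \|f\|_{H^2(\D^N)}^2 \, .
\]
Combined with the boundedness of $C_\chi : \mathcal{B}_{N-2} \to H^2$ (which the Carleson estimate recalled below will provide), this also shows that $C_\Phi$ does map $H^2(\D^N)$ into itself.

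\emph{Invoking Proposition \ref{seconde} and balancing.} With $\gamma = N-2$, Proposition \ref{seconde} gives
\[
a_n(C_\chi : \mathcal{B}_{N-2} \to H^2) \lesssim \inf_{0 < h < 1}\Bigl[(n+1)^{(N-1)/2}\e^{-nh} + \sup_{0 < t \leq h}\sqrt{\rho_\chi(t)/t^{N}}\,\Bigr] \, .
\]
The key external input is the sharp Carleson estimate for the cusp map from \cite{LIQURO}, which takes the form $\rho_\chi(t) \lesssim t \exp(-c/t)$ for some $c > 0$. Under this bound, $\rho_\chi(t)/t^N \lesssim t^{1-N}\exp(-c/t)$ is monotone increasing for small $t$ (as one sees from the logarithmic derivative), so the inner supremum is attained at $t = h$. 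Choosing $h = \lambda/\sqrt{n}$ turns both terms into a polynomial in $n$ times an exponential $\e^{-c' \sqrt{n}}$; picking $\lambda$ so as to balance the two exponential rates and absorbing the polynomial prefactors into a slightly smaller rate then yields $a_n(C_\Phi) \lesssim \e^{-d\sqrt{n}}$ for some $d = d(N) > 0$. The conceptual crux is the routing through the weighted Bergman space $\mathcal{B}_{N-2}$: a naive factorization through $H^2(\D)$ alone fails for $N \geq 2$, since $D$ is unbounded into $H^2$, and this is precisely what makes Proposition \ref{seconde} (rather than the unweighted Proposition \ref{second}) the essential tool here.
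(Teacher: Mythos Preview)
Your proof is correct and follows essentially the same approach as the paper: the factorization $C_\Phi = J \circ C_\chi \circ D$ through the diagonal map $D : H^2(\D^N) \to \mathcal{B}_{N-2}$ (the paper calls it $M$), the Cauchy--Schwarz argument for boundedness of $D$, the application of Proposition~\ref{seconde} with the cusp Carleson estimate $\rho_\chi(t) \lesssim \e^{-c/t}$, and the choice $h \sim 1/\sqrt{n}$ all match the paper's argument. Your added remarks on why the supremum in Proposition~\ref{seconde} is attained at $t=h$ and on why the routing through $\mathcal{B}_{N-2}$ (rather than $H^2(\D)$) is forced are welcome clarifications that the paper leaves implicit.
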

\smallskip

\noindent {\bf Remark.} We have to compare with \cite[Theorem~6.2]{BLQR} where, for:
\begin{displaymath} 
\Psi (z_1, \ldots, z_N) = \big( \chi (z_1), \ldots, \chi (z_N) \big) \, ,
\end{displaymath} 
it is shown that, for constants $b \geq a > 0$ depending only on $N$:
\begin{displaymath} 
\e^{- b \, (n^{1 /N} / \log n)} \lesssim a_n (C_\Psi) \lesssim \e^{- a \, (n^{1 /N} / \log n)} \, .
\end{displaymath} 

Note also that for $N = 1$, the estimate of Theorem~\ref{cusp} is very crude. 

\begin{proof} [Proof of Theorem~\ref{cusp}] 
Take $\gamma = N - 2$. As in \cite[Section~4]{DHL}, we have thanks to the Cauchy-Schwarz inequality, and the fact that 
$\sum_{|\alpha|= n} 1 \approx ( n + 1)^{N - 1}$, a  factorization:
\begin{displaymath} 
C_\Phi = J C_{\chi} M \, ,
\end{displaymath} 
where $M \colon H^{2} (\D^N) \to \mathcal{B}_{\gamma}$ is defined by $M \! f = g$ with:
\begin{equation} \label{M}
\qquad g (z) = f (z, z, \ldots, z) = \sum_{n = 0}^\infty \Bigg( \sum_{|\alpha| = n} a_{\alpha} \Bigg) \, z^n \, , \qquad z \in \D \, , \quad
\end{equation} 
for
\begin{displaymath} 
f (z_1, z_2, \ldots, z_N) = \sum_{\alpha} a_{\alpha} z_1^{\alpha_1} \cdots z_N^{\alpha_N} \, ,
\end{displaymath} 
and where $J \colon H^{2}(\D) \to H^{2} (\D^N)$ is the canonical injection given by:
\begin{equation} \label{J}
(J h) (z_1, z_2, \ldots, z_N) = h (z_1) \, .
\end{equation} 
This corresponds to a diagram:
\begin{equation} \label{factorization}
H^{2} (\D^N) \mathop{\longrightarrow}^M \mathcal{B}_{\gamma}
\mathop{\longrightarrow}^{C_\chi} H^{2} (\D) 
\mathop{\longrightarrow}^J H^{2} (\D^N) \, ,
\end{equation} 
where $C_\chi \colon \mathcal{B}_{\gamma} = \mathcal{B}_{N - 2} \to  H^{2} (\D)$ is a bounded operator. Indeed, 
we have the behavior (\cite[Lemma~4.2]{LIQURO}):
\begin{displaymath} 
|1 - \chi^\ast (\e^{i \theta})| \approx \frac{1}{\log (1/|\theta|)} \, \raise 1,5 pt \hbox{,}
\end{displaymath} 
and this implies, with $c$ an absolute constant:
\begin{equation} \label {chi Carleson}
\begin{split}
m_\chi [S (\xi, h)] & \lesssim m_{\chi} [S (1, h)] = m (\{ |\chi^\ast (\e^{i \theta}) - 1 | < h)  \\
& \lesssim m [ \{ c / \log (1 / |\theta|)  < h \} ]  \leq \e^{- c/h} \, ; 
\end{split}
\end{equation}
in particular $\rho_\chi (h) \leq \e^{- c/h} = {\rm O}\, (h^N)$, so $m_\chi$ is an $N$-Carleson measure and the Stengenga-Carleson theorem 
(\cite[Theorem~1.2]{Stegenga}) says that the operator $C_\chi \colon \mathcal{B}_{N - 2} \to  H^{2} (\D)$ is bounded. 

Now Proposition~\ref{seconde} with \eqref{chi Carleson} give: 
\begin{displaymath} 
a_{n} (C_{\chi} \colon \mathcal{B}_{\gamma} \to H^2) 
\lesssim  \inf_{0 < h < 1} \big[ (n + 1)^{(N - 1)/2} \, \e^{- n h} +  \e^{- c/h} h^{- N/2} \big] \, .
\end{displaymath} 
Adjusting $h = 1/\sqrt{n}$, we get $a_{n} (C_{\chi} \colon \mathcal{B}_{\gamma} \to H^2) \lesssim \e^{- d\sqrt{n}}$ for some positive constant $d$. 
Finally, the factorization $C_\Phi = J C_{\chi} M$ and the ideal property of approximation numbers give the result.
\end{proof}

In the case of lens maps, Proposition~\ref{seconde} gives very poor estimates. We avoid using this theorem in \cite[Section~4]{DHL}, when $N = 2$, 
using the semi-group property of those lens maps. The same proof gives for arbitrary $N \geq 2$ the following result.
\goodbreak
\begin{theoreme} \label{lens} 
Let $\lambda_{\theta}$ the lens map with parameter $\theta$, $0 < \theta < 1$, and let $\Phi \colon \D^N \to \D^N$ be the diagonal map defined by:
\begin{equation} 
\Phi (z_1, z_2, \ldots, z_N) = \big( \lambda_{\theta} (z_1), \lambda_{\theta} (z_1), \ldots, \lambda_{\theta} (z_1) \big) \, . 
\end{equation} 
Then: 
\begin{itemize}
\setlength\itemsep{2 pt}
\item [$1)$] if $\theta >1/N$, $C_\Phi$ is unbounded on $H^{2} (\D^N)$;

\item [$2)$] if $\theta = 1/N$, $C_\Phi$ is bounded and not compact on $H^{2} (\D^N)$;

\item [$3)$] if $\theta < 1/N$, $C_\Phi$ is compact on $H^{2} (\D^N)$ and moreover:
\begin{equation} \label{mino lens}
a_{n} (C_\Phi) \lesssim \e^{- d \sqrt{n}} 
\end{equation} 
for a constant $d > 0$ depending only on $\theta$ and $N$.
\end{itemize}
\end{theoreme}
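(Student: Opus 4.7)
My plan is to mimic the proof of Theorem~\ref{cusp}, but to replace the use of Proposition~\ref{seconde} (too weak here, as the paper explicitly notes) by an appeal to the semigroup identity $\lambda_\alpha \circ \lambda_\beta = \lambda_{\alpha\beta}$ of the lens maps. Concretely, I first re-use the factorization $C_\Phi = J C_{\lambda_\theta} M$ with $M : H^{2}(\D^N) \to \mathcal{B}_{N-2}$ the diagonal restriction (bounded by Cauchy--Schwarz, as in \cite{DHL}) and $J$ the canonical isometry. Combined with the polydisk trace theorem providing a bounded right inverse to $M$ at the critical weight $\gamma = N-2$, this turns boundedness, compactness and the size of the approximation numbers of $C_\Phi$ into the corresponding statements about $C_{\lambda_\theta} : \mathcal{B}_{N-2} \to H^{2}(\D)$, up to absolute multiplicative constants.

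For the one-variable analysis I would combine the classical estimate $\rho_{\lambda_\theta}(h) \approx h^{1/\theta}$ with Stegenga's Carleson embedding theorem for $\mathcal{B}_{N-2}$ (the $\gamma + 2 = N$ case already used in Theorem~\ref{cusp}): $C_{\lambda_\theta} : \mathcal{B}_{N-2} \to H^{2}(\D)$ is bounded iff $m_{\lambda_\theta}$ is $N$-Carleson, i.e.\ $1/\theta \geq N$, and compact iff $1/\theta > N$. Transported back through the factorization, this yields (1), (2) and the qualitative part of (3).

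For the quantitative bound $a_n(C_\Phi) \lesssim \e^{-d\sqrt{n}}$ in (3) I use that $\theta N \in (0, 1)$ and write $\lambda_\theta = \lambda_{1/N} \circ \lambda_{\theta N}$, producing the two-step factorization
\begin{displaymath}
C_{\lambda_\theta} : \mathcal{B}_{N-2} \xrightarrow{\,C_{\lambda_{1/N}}\,} H^{2}(\D) \xrightarrow{\,C_{\lambda_{\theta N}}\,} H^{2}(\D) \,.
\end{displaymath}
The first arrow is bounded (critical case of the preceding paragraph), and the second satisfies $a_n \lesssim \e^{-c\sqrt{n}}$ by the one-dimensional lens-map estimate from \cite{LIQUEROD}. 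The ideal property of approximation numbers then propagates this decay to $C_{\lambda_\theta} : \mathcal{B}_{N-2} \to H^{2}(\D)$, and through the original factorization to $C_\Phi$ itself.

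The main obstacle is the polydisk trace theorem: securing a bounded right inverse to $M$ at the critical weight $\gamma = N - 2$, since the naive lift $g \mapsto g(z_1)$ is far too large in the $H^{2}(\D^N)$ norm. This is done for $N = 2$ in \cite{DHL} and the same symmetrization-type argument should extend to general $N \geq 2$. A secondary verification is needed of the semigroup identity for the precise normalization of $\lambda_\theta$ in force, which reduces to a straightforward computation in the conjugated half-plane model $w \mapsto w^\theta$.
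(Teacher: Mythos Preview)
Your argument for part (3) is essentially the paper's: the same factorization $C_\Phi = J\,C_{\lambda_{N\theta}}\,C_{\lambda_{1/N}}\,M$ through $\mathcal{B}_{N-2}$, with $C_{\lambda_{1/N}}\colon \mathcal{B}_{N-2}\to H^2(\D)$ bounded by Stegenga (since $m_{\lambda_{1/N}}$ is $N$-Carleson) and the $\e^{-d\sqrt{n}}$ decay inherited from the one-variable lens estimate for $C_{\lambda_{N\theta}}$ on $H^2(\D)$.

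For (1) and (2) you take a genuinely different route. The paper handles the negative statements (unboundedness for $\theta>1/N$, non-compactness for $\theta=1/N$) directly with reproducing kernels on $H^2(\D^N)$: one computes $\|C_\Phi^\ast K_{a,\ldots,a}\|/\|K_{a,\ldots,a}\|\approx (1-|a|)^{(1-N\theta)/2}$, which is unbounded for $\theta>1/N$ and bounded away from $0$ for $\theta=1/N$. You instead reduce everything to the one-variable operator $C_{\lambda_\theta}\colon \mathcal{B}_{N-2}\to H^2(\D)$ via a bounded right inverse $R$ of $M$ (so that $C_{\lambda_\theta}=P\,C_\Phi\,R$ with $P$ the obvious left inverse of $J$), and then invoke both directions of Stegenga together with $\rho_{\lambda_\theta}(h)\approx h^{1/\theta}$. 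This is correct, and more uniform in that all three cases pass through the same reduction; the price is the extra ingredient $R$ and the need for the full ``iff'' (and ``little-o'') versions of the Carleson embedding, whereas the reproducing-kernel computation is completely elementary and self-contained.

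Your stated ``main obstacle'' is in fact minor: a bounded right inverse of $M$ is explicit in any dimension. For $g(z)=\sum_n b_n z^n$, set the $z^\alpha$-coefficient of $Rg$ equal to $b_{|\alpha|}\big/\binom{|\alpha|+N-1}{N-1}$; then $MRg=g$ and $\|Rg\|_{H^2(\D^N)}^2=\sum_n |b_n|^2\big/\binom{n+N-1}{N-1}\approx \|g\|_{\mathcal{B}_{N-2}}^2$. So the trace/extension step requires no delicate symmetrization, just this coefficient formula.
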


\noindent {\bf Remark.} In \cite[Theorem~6.1]{BLQR}, it is shown that, for:
\begin{displaymath} 
\Psi (z_1, \ldots, z_N) = \big( \lambda_\theta (z_1), \ldots, \lambda_\theta (z_N) \big) \, ,
\end{displaymath} 
we have, for constants $b \geq a > 0$, depending only on $\theta$ and $N$:
\begin{displaymath} 
\e^{ - b \, n^{1 / (2 N)}} \lesssim a_n (C_\Psi) \lesssim \e^{ - a \, n^{1 / (2 N)}} \, .
\end{displaymath} 
\begin{proof} [Proof of Theorem~\ref{lens}] 
That had been proved, for $N =2$ in \cite[Theorem~4.2 and Theorem~4.4]{DHL}. For convenience of the reader, we sketch the proof.
\smallskip

Assume first $\theta \leq 1/N$, and write $\lambda_\theta = \lambda_{N \theta}\circ \lambda_{1/N}$, where we set, for convenience, $\lambda_1 (z) = z$, 
so $C_{\lambda_1} = {\rm Id}$. 
As in the proof of Theorem~\ref{cusp} (see \cite[Section~4]{DHL}), we have a  factorization:
\begin{displaymath} 
C_\Phi = J C_{\lambda_{N \theta}} C_{\lambda_{1/N}} M \, ,
\end{displaymath} 
where $M$ and $J$ are defined in \eqref{M} and \eqref{J}.
  
This corresponds to a diagram (recall that $\gamma = N - 2$):
\begin{displaymath} 
H^{2} (\D^N) \mathop{\longrightarrow}^M \mathcal{B}_{\gamma}
\mathop{\longrightarrow}^{C_{\lambda_{1/N}}} H^{2} (\D) 
\mathop{\longrightarrow}^{C_{\lambda_{N\theta}}} H^{2} (\D)
\mathop{\longrightarrow}^{J} H^{2} (\D^N) \, .
\end{displaymath} 
The second arrow is bounded, since we know (\cite[Lemma~3.3]{LELIQURO}) that the pullback measure $m_{\lambda_{1/N}}$ is $N$-Carleson, so that 
$C_{\lambda_{1/N}}$ maps $\mathcal{B}_{N - 2}$ to $H^{2}(\D)$ by the Stegenga-Carleson embedding theorem (\cite[Theorem~1.2]{Stegenga}).
\smallskip

For $\theta < 1/ N$, we have $N \theta < 1$ and $C_{\lambda_{N \theta}}$ is compact and, for some constant $b = b (\theta)$, we have 
$a_n (C_{\lambda_{N \theta}}) \lesssim \e^{ - b \sqrt{n}}$ (\cite[Theorem~2.1]{LELIQURO}). Hence $C_\Phi$ is compact and 
$a_n (C_\Phi) \lesssim \e^{ - b \sqrt{n}}$.
\smallskip

Now, for $\theta \geq 1 / N$, we consider the reproducing kernels:
\begin{displaymath} 
K_{a_1, \ldots, a_N} (z_1, \ldots, z_N) = \prod_{j = 1}^N \frac{1}{1 - \overline{a}_ j z_j} \, \cdot
\end{displaymath} 
We have:
\begin{displaymath} 
\| K_{a_1, \ldots, a_N} \|^2 = \prod_{j = 1}^N \frac{1}{1 - |a_j|^2} 
\end{displaymath} 
and:
\begin{displaymath} 
C_\Phi^\ast (K_{a_1, \ldots, a_N}) = K_{\lambda_\theta (a_1), \ldots, \lambda_\theta (a_1)} \, ,
\end{displaymath} 
so:
\begin{displaymath} 
\| C_\Phi^\ast (K_{a_1, \ldots, a_N}) \|^2 = \bigg( \frac{1}{1 - |\lambda_\theta (a_1) |^2} \bigg)^N \, \cdot
\end{displaymath} 
Since:
\begin{displaymath} 
1 - |\lambda_\theta (a_1) |^2 \approx 1 - |\lambda_\theta (a_1) | \approx (1 - |a_1|)^\theta \, , 
\end{displaymath} 
we see that $\| C_\Phi^\ast (K_{a_1, \ldots, a_N}) \| / \| K_{a_1, \ldots, a_N}\|$ is not bounded  for $\theta > 1 / N$, so $C_\phi$ is then not bounded; 
and it does not converge to $0$ for $\theta = 1 / N$, so $C_\Phi$ is then not compact.  
\end{proof}
%
\section{Surjectivity}

Let us come back to our surjectivity issues. 
\smallskip

Let us first remark that Theorem~\ref{first} gives the following result.
\begin{theoreme} \label{bet} 
For every non-decreasing function $\delta \colon (0, 1) \to (0, 1)$, there exists a \emph{surjective and four-valent} symbol $\psi$, and  $0 < h_0 < 1$, such that, 
for $0 < h \leq h_0$:
\begin{equation} \label{be} 
m (\{z \in \T \, ; \ |\phi^\ast (z)| \geq 1 - h\}) \leq \delta (h) \, . 
\end{equation}
\end{theoreme}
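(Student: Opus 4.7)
The plan is to upgrade the two-valent, nearly surjective symbol provided by Theorem \ref{first} to a four-valent, genuinely surjective one by postcomposing with a fixed degree-two Blaschke product, chosen so that no information is lost at the level of boundary moduli. The point is that I apply Theorem \ref{first} not to $\delta$ itself but to a rescaled version $\tilde\delta$ absorbing the inevitable distortion constant, and then verify that the rescaling propagates correctly.

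I would first fix some $a \in \D \setminus \{0\}$ and set
\[
F(z) = z \cdot \frac{z - a}{1 - \bar a z}.
\]
This is a two-valent finite Blaschke product whose zero set in $\D$ is $\{0, a\}$; in particular $F(\D \setminus \{0\}) = \D$, since any $\zeta \in \D \setminus \{0\}$ has both its preimages in $\D \setminus \{0\}$, while $\zeta = 0$ is reached from $a$. Because $F$ is rational with $|F| = 1$ on $\T$ and has no critical points on $\T$ (one checks directly that the numerator of $F'$ has one root in $\D$ and one outside $\overline{\D}$), the standard expansion of $|F(w)|^2$ near $\T$ yields $1 - |F(w)| \approx 1 - |w|$ in a one-sided neighborhood of $\T$. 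Consequently there exists $C = C(F) > 1$ with
\[
\{w \in \D : |F(w)| \geq 1 - h\} \subseteq \{w \in \D : |w| \geq 1 - Ch\}
\]
for all sufficiently small $h > 0$.

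Next, I would define $\tilde\delta \colon (0,1) \to (0,1)$ by $\tilde\delta(h) = \delta(h/C)$, which is still non-decreasing, and apply Theorem \ref{first} to $\tilde\delta$ to obtain a two-valent symbol $\phi$ with $\phi(\D) = \D \setminus \{0\}$ and a threshold $\tilde h_0$ such that $m(\{|\phi^\ast| \geq 1 - t\}) \leq \tilde\delta(t)$ for $0 < t \leq \tilde h_0$. Setting $\psi := F \circ \phi$, a composition of two two-valent maps is at most four-valent, and
\[
\psi(\D) = F(\phi(\D)) = F(\D \setminus \{0\}) = \D,
\]
so $\psi$ is the desired surjective four-valent symbol.

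Finally, continuity of $F$ on $\overline{\D}$ gives $\psi^\ast = F \circ \phi^\ast$ almost everywhere on $\T$, so the inclusion above (applied pointwise to $w = \phi^\ast(\xi) \in \D$; the case $|\phi^\ast(\xi)| = 1$ is trivial since $|F^\ast| = 1$) yields $\{|\psi^\ast| \geq 1 - h\} \subseteq \{|\phi^\ast| \geq 1 - Ch\}$ and hence $m(\{|\psi^\ast| \geq 1 - h\}) \leq \tilde\delta(Ch) = \delta(h)$ for $0 < h \leq h_0 := \tilde h_0/C$. The only step that is more than bookkeeping is the boundary comparison $1 - |F(w)| \approx 1 - |w|$; I expect this to be the mild main obstacle, but since $F$ is an explicit rational function it is a routine computation, consistent with the authors' framing of Theorem \ref{bet} as an immediate consequence of Theorem \ref{first}.
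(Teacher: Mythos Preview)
Your proof is correct and follows essentially the same route as the paper: postcompose the two-valent, nearly surjective $\varphi$ from Theorem~\ref{first} with a degree-two Blaschke product that sends $\D\setminus\{0\}$ onto $\D$, use the lower bound $1-|B(w)|\gtrsim 1-|w|$ to transfer the boundary-modulus estimate, and absorb the distortion constant by rescaling $\delta$ beforehand. The only difference is cosmetic: the paper takes $B(z)=\bigl(\tfrac{z-a}{1-az}\bigr)^2$ with $0<a<1$ and computes the explicit bound $\tfrac{1-|B(z)|}{1-|z|}\ge\tfrac{1-a^2}{4}$ directly from the Blaschke factor identity $1-|\phi_a(z)|^2=\tfrac{(1-a^2)(1-|z|^2)}{|1-az|^2}$, whereas you take $F(z)=z\,\tfrac{z-a}{1-\bar a z}$ and appeal to the absence of critical points on $\T$; the paper's explicit computation is a little cleaner and avoids your detour through $F'$, but the substance is identical.
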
 
\begin{proof} 
Just observe that the passage from ``$\varphi$ two-valent and nearly surjective'' to ``$\psi$ four-valent and  surjective'' is harmless: for this, consider the Blaschke 
product:
\begin{displaymath} 
B (z) = \bigg( \frac{z - a}{1 - a z} \bigg)^2 \, ,
\end{displaymath} 
where $0 < a < 1$, and take  $\psi = B\circ \varphi$; we observe that $B (\D \setminus \{0\}) = \D$ since 
$a^2 = B \big( \frac{2 a}{1 + a^2} \big)$, and, for $z \in \D$:
\begin{displaymath} 
\frac{1 - |B (z)|}{1 - |z|}\geq \frac{\displaystyle 1 - |\frac{z - a}{1 - a z}|^2}{1 - |z|^2} = \frac{1 - a^2}{|1 - a z|^2} \geq  \frac{1 - a^2}{4} 
\, \raise 1pt \hbox{,} 
\end{displaymath} 
so that:
\begin{displaymath} 
m (|\psi^\ast| > 1 - h) = m (1 - |B \circ \varphi^\ast| < h) \leq m \big( 1 - |\varphi^\ast| \leq \kappa_a h \big) \, ,
\end{displaymath} 
with $\kappa_a = 4 / (1 - a^2)$. Hence, this map $\psi$ is surjective, four-valent,  and satisfies \eqref{be}, as well, up to a change of $\delta (h)$ to 
$\delta (h /\kappa_a)$ for $\varphi$ at the beginning.
\end{proof}
%
 
\subsection{A more precise statement}

Our new statement is as follows.
\begin{theoreme} \label{third} 
For every positive sequence $(\varepsilon_n)_n$ with limit $0$, there exists a \emph{surjective and four-valent} symbol $\varphi$ such that:
\begin{displaymath} 
a_{n} (C_\varphi) \lesssim \e^{- n\varepsilon_n} \, .
\end{displaymath} 
Consequently, there exists a \emph{surjective and four-valent} symbol $\varphi \colon \D\to \D$ such that the composition operator 
$C_\varphi \colon H^2 \to H^2$ is in every Schatten class $S_{p} (H^2)$, $p > 0$.
\end{theoreme}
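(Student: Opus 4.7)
The plan is to feed Theorem~\ref{bet} with a well-chosen non-decreasing $\delta$, and then estimate $a_n(C_\varphi)$ through the Carleson-type bound of Proposition~\ref{second}. The bridge between the boundary hypothesis \eqref{be} and the Carleson function $\rho_\varphi$ is the elementary inclusion $S(\xi, h) \subset \{z \in \D : |z| \geq 1 - h\}$ for $\xi \in \T$, which gives
\begin{displaymath}
\rho_\varphi(h) \leq m(\{|\varphi^\ast| \geq 1 - h\}) \leq \delta(h), \qquad 0 < h \leq h_0,
\end{displaymath}
for any symbol $\varphi$ produced by Theorem~\ref{bet}. Plugging this into Proposition~\ref{second} yields
\begin{displaymath}
a_n(C_\varphi) \lesssim \inf_{0 < h \leq h_0} \Big[\, \e^{-nh} + \sqrt{\delta(h)/h}\, \Big].
\end{displaymath}

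Specializing to $h = \varepsilon_n$, the first term becomes $\e^{-n\varepsilon_n}$, and the second term is of the same order precisely when $\delta(\varepsilon_n) \leq \varepsilon_n\, \e^{-2 n \varepsilon_n}$. The whole problem thus reduces to producing a non-decreasing $\delta : (0,1) \to (0,1)$ satisfying this inequality for all large $n$; the finitely many excluded $n$ are handled by the trivial bound $a_n \leq \|C_\varphi\|$.

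Before engineering $\delta$, I would perform two harmless modifications of $(\varepsilon_n)$, each time replacing it by a pointwise larger sequence still tending to $0$ (which only strengthens the target inequality $a_n \lesssim \e^{-n\varepsilon_n}$). If $n\varepsilon_n$ stays bounded then $\e^{-n\varepsilon_n}$ is bounded below and the conclusion is immediate from the boundedness of $C_\varphi$; otherwise, replacing $\varepsilon_n$ by $\max(\varepsilon_n, (\log n)/n)$ I may assume $n \varepsilon_n \to \infty$. Next, replacing $\varepsilon_n$ by $\sup_{k \geq n} \varepsilon_k$ I may also assume $(\varepsilon_n)$ is non-increasing.

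Under these reductions, $\eta_n := \varepsilon_n\, \e^{-2 n \varepsilon_n} \to 0$; setting $\tilde\eta_n := \min_{k \leq n} \eta_k$ gives a non-increasing positive sequence, and defining $\delta(h) := \tilde\eta_n$ for $h \in (\varepsilon_{n+1}, \varepsilon_n]$, extended by an arbitrary constant less than $1$ on $(\varepsilon_1, 1)$, produces a non-decreasing $\delta : (0,1) \to (0,1)$ with $\delta(\varepsilon_n) \leq \eta_n$. Theorem~\ref{bet} applied to this $\delta$ then yields the desired surjective and four-valent symbol, and the Schatten consequence follows since $(\e^{-n\varepsilon_n}) \in \ell^p$ whenever $n \varepsilon_n \to \infty$. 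The main delicate point I expect is precisely this preliminary tidying of $(\varepsilon_n)$ and the step-function construction of $\delta$; everything else is a direct chaining of the previously quoted results.
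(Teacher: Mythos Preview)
Your proof is correct and follows the same route as the paper: pick a suitable non-decreasing $\delta$, invoke Theorem~\ref{bet} to get the surjective four-valent symbol, bound $\rho_\varphi(h)$ via the inclusion $S(\xi,h)\subset\{|z|\ge 1-h\}$, feed this into Proposition~\ref{second}, and specialize $h=\varepsilon_n$. The only differences are cosmetic: the paper applies Theorem~\ref{bet} with the auxiliary function $h\mapsto h\,\delta^2(h)$ (so that $\sqrt{\rho_\varphi(h)/h}\le\delta(h)$ directly, requiring only $\delta(\varepsilon_n)\le \e^{-n\varepsilon_n}$) and simply asserts that such a $\delta$ exists, whereas you carry out the monotone tidying of $(\varepsilon_n)$ and the step-function construction of $\delta$ explicitly; for the Schatten consequence the paper just takes $\varepsilon_n=n^{-1/2}$.
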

\begin{proof} 
Observe first that $\Vert \varphi \Vert_\infty = 1$ when $\varphi$ is surjective, so that, in view of Theorem~\ref{basic}, we cannot dispense with the numbers 
$\varepsilon_n$, even if they can tend to $0$ arbitrarily slowly. 

Now, we can choose $\delta \colon (0, 1) \to (0, 1)$ non-decreasing such that $\delta (\varepsilon_n) \leq \e^{- n \varepsilon_n}$ for all $n$, and then, using 
Theorem~\ref{bet}, we get a  surjective and four-valent symbol $\varphi$, satisfying for all $h$ small enough:
\begin{displaymath} 
\rho_{\varphi} (h) \leq h \, \delta^{\,2} (h) \, .
\end{displaymath} 
Proposition~\ref{second} gives: 
\begin{displaymath} 
a_{n} (C_\varphi) \lesssim \inf_{0 < h < 1} \big[ \e^{- n h} + \delta (h) \big] \, .
\end{displaymath} 
Adjusting $h = \varepsilon_n$, we get $a_{n} (C_\varphi) \lesssim \e^{- n \varepsilon_n}$.  

To get the second part of the theorem, just take $\varepsilon_n = n^{- 1/2}$.
\end{proof}
%

\subsection{A simplified proof of Theorem~\ref{first}}

We give here the announced simplified proof of Theorem~\ref{first}. This proof is based on the following key lemma, in which $\mathcal{H} (\D)$ denotes 
the set of holomorphic functions on $\D$.
\begin{lemme} \label{key} 
There exists a numerical constant $C$ such that, if $f \in \mathcal{H}(\D)$ satisfies, for some $\alpha \in \R$:
\begin{displaymath}
\left\{
\begin{array} {l}
\Im [f (0)] < \alpha \smallskip \\ 
f (\D) \subseteq \{z \in \C \, ; \ 0 < \Re z < \pi\} \cup \{z \in \C \, ; \ \Im z < \alpha \} \, ,
\end{array} 
\right.
\end{displaymath}
then:
\begin{displaymath} 
m (\{\Im f^{\ast} > y\}) \leq C \, \e^{\alpha - y} \, , \quad \text{for } y \geq \alpha \, .
\end{displaymath} 
\end{lemme}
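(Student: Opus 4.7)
The plan is to exponentiate $f$, conformally map the resulting image-domain to the upper half-plane, and close the estimate with a classical Poisson-kernel computation. A preliminary vertical translation $f\mapsto f-i\alpha$ reduces us to the case $\alpha=0$: then $f(\D)\subseteq\Omega_{0}:=\{0<\Re z<\pi\}\cup\{\Im z<0\}$ and $\Im f(0)<0$, and one must show $m(\{\Im f^{*}>y\})\lesssim \e^{-y}$ for $y\ge0$.

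Set $g:=\e^{if}$. On the strip $\Im g(z)=\e^{-\Im z}\sin(\Re z)>0$, and on $\{\Im z<0\}$ one has $|g(z)|=\e^{-\Im z}>1$; hence $g(\D)\subseteq\Omega^{*}:=\{\Im w>0\}\cup\{|w|>1\}=\C\setminus\overline{D^{-}}$, where $\overline{D^{-}}=\{|w|\le1,\ \Im w\le0\}$ is the closed lower unit half-disk, and $|g(0)|>1$. The event transforms into $\{\Im f^{*}>y\}=\{g^{*}\in E_{y}\}$ with $E_{y}:=\{|w|<\e^{-y},\ \Im w>0\}$, a small upper half-disk around $0$.

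Next I would introduce the conformal map $\Phi\colon\Omega^{*}\to\{\Im\zeta>0\}$ defined by
\[
\Phi(w)=\e^{i\pi/3}\Bigl(\frac{w-1}{w+1}\Bigr)^{2/3},
\]
the branch of the $2/3$-power being fixed so that $\Phi(\Omega^{*})=\{\Im\zeta>0\}$: the M\"obius piece sends $\overline{D^{-}}$ onto the closed third quadrant, the $2/3$-power unfolds the remaining $3\pi/2$-sector to a half-plane, and the rotation by $\e^{i\pi/3}$ straightens it. Note $\Phi(0)=-1$. Two pointwise estimates are then needed: a Taylor expansion at $w=0$ gives $|\Phi(w)+1|\le C_{1}|w|$ for $|w|\le 1/2$, so that $\Phi(E_{y})$ is contained in the upper half-disk of radius $C_{1}\e^{-y}$ around $-1$; and the condition $|g(0)|>1$ places $\Phi(g(0))$ in the sector $\Sigma:=\{0<\arg\zeta<2\pi/3\}$, on which a direct polar-coordinate calculation yields $|\zeta+1|\ge\sqrt3/2$ and $\Im\zeta/|\zeta+1|^{2}\le\sqrt3/2$.

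Set $H:=\Phi\circ g\colon\D\to\{\Im\zeta>0\}$, and let $u$ be the harmonic measure of $\Phi(E_{y})$ in $\{\Im\zeta>0\}$, extended to equal $1$ on $\Phi(E_{y})$; this $u$ is superharmonic and bounded by $1$. Then $u\circ H$ is superharmonic on $\D$, and the super-mean inequality yields
\[
m(\{\Im f^{*}>y\})=m(\{H^{*}\in\Phi(E_{y})\})\le u(H(0)).
\]
Using the Joukowski-type map $\zeta\mapsto(\zeta+1)+r^{2}/(\zeta+1)$ with $r=C_{1}\e^{-y}$, which sends $\{\Im\zeta>0\}$ minus the closed upper half-disk of radius $r$ at $-1$ conformally onto $\{\Im\eta>0\}$ and carries the semicircular part of the removed boundary onto $[-2r,2r]$, the question reduces to a Poisson integral in the half-plane, giving $u(H(0))\lesssim r\,\Im H(0)/|H(0)+1|^{2}\lesssim \e^{-y}$ by the uniform bounds on $\Sigma$. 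Reintroducing $\alpha$ restores the factor $\e^{\alpha-y}$. The main obstacle is the careful setting up of the branch of $\Phi$ and the verification of the two pointwise bounds on $\Phi(E_{y})$ and $\Phi(\{|w|>1\})$; once these are secured, the remaining half-plane computation is standard.
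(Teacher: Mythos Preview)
Your argument is correct and reaches the same conclusion, but by a genuinely different route. Both proofs begin by exponentiating $f$: you set $g=\e^{if}$, while the paper (after the same reduction to $\alpha=0$) forms $F_1=1+\e^{-if+i\pi/2}=1+i/g$, a M\"obius transform of your $g$. The paper then observes directly that $F_1$ takes values in the right half-plane and invokes Kolmogorov's weak-type inequality $m(|F^\ast|>\lambda)\le (a/\lambda)\,\Re F(0)$ to finish in a few lines; the event $\{\Im f^\ast>y\}$ forces $|F^\ast|\gtrsim \e^{y}$, and $\Re F(0)<2$. You instead identify the exact image $g(\D)\subseteq\Omega^\ast=\C\setminus\overline{D^-}$, straighten it to a half-plane via the explicit map $\Phi$ involving a $2/3$-power, and then bound the relevant harmonic measure by a Joukowski substitution and a Poisson integral. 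The paper's path is shorter because the additive shift ``$1+$'' already lands in a half-plane, sidestepping fractional powers and the careful tracking of $\Phi(E_y)$ and $\Phi(\{|w|>1\})$; your path is more geometric and self-contained, not relying on Kolmogorov's inequality as a black box. One small correction: $\Omega^\ast$ is not simply connected (its complement in $\hat\C$ has the two components $\overline{D^-}$ and $\{\infty\}$), so $\Phi$ cannot map it onto the full upper half-plane; in fact $\Phi(\Omega^\ast)$ misses the single point $\e^{i\pi/3}$. This is harmless, since all you actually use is that $H=\Phi\circ g$ maps $\D$ holomorphically into $\{\Im\zeta>0\}$ and that the stated pointwise estimates on $\Phi$ hold.
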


We first show how this lemma allows us to conclude. 
\begin{proof} [Proof of Theorem~\ref{first}]
Let $g \colon (0, \infty) \to (0, \infty)$ be a continuous decreasing function such that:
\begin{displaymath} 
\lim_{t \to 0^{+}} g (t) = +\infty \, , \quad g (\pi) = \pi \, ,\quad \lim_{t \to +\infty} g (t) = 0 \, .
\end{displaymath} 
Then let $\Omega$ be the simply connected region defined by:
\begin{displaymath} 
\Omega = \{x + i y \, ;\ x > 0 \, , \quad g (x) < y < g (x) + 4 \pi\} \, ,
\end{displaymath} 
and $f \colon\D \to \Omega$ be a Riemann map such that $f (0) = \pi + 3 i \pi$. Observe that we can apply Lemma~\ref{key} to $f$ with $\alpha = 5\pi$ 
since $\Im f (0) = 3\pi$ and if $f (z) = x + i y$ with $x \geq \pi$; hence:
\begin{displaymath} 
\Im f (z) = y < g (x) + 4 \pi \leq g (\pi) + 4 \pi = 5 \pi \, . 
\end{displaymath} 
Finally, consider the symbol $\varphi = \e^{- f}$. It is nearly surjective: $\phi (\D) = \D \setminus \{0\}$, and two-valent, as easily checked.  

For $0 < h \leq 1/2$, we have for $\xi \in \T$ and $|\phi^\ast (\xi) | > 1 - h$:
\begin{displaymath} 
\e^{ - 2 h} \leq 1 - h < |\phi^\ast (\xi) | = \exp \big( - \Re  f^\ast (\xi) \big) \, ;
\end{displaymath} 
hence $\Re f^{\ast} (\xi) < 2 h$. 

But if $2 h > x = \Re f^{\ast} (\xi)$, we have $g (x) > g (2 h)$. As $f^{\ast} (\xi) = x + i y \in \overline{\Omega}$, we get   
$\Im f^{\ast} (\xi) = y \geq g (x) > g (2h)$.  Lemma~\ref{key} now gives:
\begin{equation} \label{kol} 
m (\{\xi \, ; \ |\varphi^{\ast} (\xi)| > 1 - h \}) \leq m (\{\xi \, ;\ \Im f^{\ast} (\xi) > g (2 h) \}) \leq C \, \e^{5 \pi  - g (2 h)} \, .
\end{equation}
It is now enough to adjust $g$ so as to have $\e^{g(t)}\geq C \, \e^{5 \pi} / \delta (t/2)$ for $t$ small enough to get \eqref{firs} from \eqref{kol}.
\end{proof}
\begin{proof} [Proof of Lemma~\ref{key}] 
We now prove  Lemma~\ref{key}. If $\e^{y - \alpha} < 2$, there is nothing to prove, since then:
\begin{displaymath} 
m (\Im f^{\ast} > y) \leq 1 \leq 2 \, \e^{\alpha - y} \, .
\end{displaymath} 
We can hence assume that $\e^{y - \alpha}\geq 2$.
First, we make a comment. If the Riemann mapping theorem is very general and flexible, it gives very few informations on the parametrization 
$t \mapsto f^{\ast} (\e^{it})$ when $f \colon \D\to \Omega$ is a conformal map, except in some specific cases (lens maps, cusps, etc.: see \cite{LIQURO}). 
Here, the Kolmogorov weak type inequality provides a substitute. Write:
\begin{displaymath} 
f = u + i v
\end{displaymath} 
and set:
\begin{displaymath} 
f_1 = - i f + i \frac{\pi}{2} - \alpha = v - \alpha + i \bigg( \frac{\pi}{2} - u \bigg) 
\end{displaymath} 
and:
\begin{displaymath} 
F_1 = 1 + \e^{f_1} = (1 + \e^{v - \alpha} \sin u) + i \e^{v - \alpha} \cos u \, .
\end{displaymath} 

If $v < \alpha$, then $\Re F_1 > 1 - |\sin u| \geq 0$. If $v \geq \alpha$, then $0 < u < \pi$ and $\Re F_1 \geq 1$. Hence $F_1$ maps $\D$ to the right 
half-plane $\C_0 = \{z \, ; \ \Re z > 0 \}$. Finally, let $F = U + i V \colon \D \to \C_0$ be defined by:
\begin{displaymath}
F = F_1 - i \Im F_{1}(0) \, , 
\end{displaymath}
so that $V (0) = 0$. By the Kolmogorov inequality for the conjugation map $U \mapsto V$, and the harmonicity of $U$, we have, for all $\lambda>0$ 
($a$ designating an absolute constant):
\begin{equation} \label{ko} 
m (|F^{\ast}| > \lambda) \leq \frac{a}{\lambda} \, \Vert U^{\ast} \Vert_1 
= \frac{a}{\lambda} \int_{\T} U^{\ast} \, dm = \frac{a}{\lambda} \, U(0) \, .
\end{equation}

Next, we claim that:
\begin{equation} \label{claim} 
|\Im F_{1} (0)|<1 \quad \text{and} \quad  U (0) < 2 \, .
\end{equation}
Indeed, $v (0) < \alpha$ by hypothesis, so that $ |\Im F_{1} (0)| = \e^{v (0) - \alpha} |\cos u (0)| < 1$, and $U (0) = 1 + \e^{v (0) - \alpha} \sin u (0) < 2$. 
Suppose now that, for some $y > \alpha$ and $z \in \D$, we have $v (z) > y$. Then, $0 <u (z) < \pi$ by our second assumption, and this implies 
$\Re \e^{f_{1} (z)} = \e^{v (z) - \alpha} \sin u (z) > 0$, so that, using $|1 + w| \geq |w|$ if $\Re w > 0$ and \eqref{claim}, and remembering that 
$\e^{y - \alpha} \geq 2$:
\begin{align*}
|F (z) | 
& = \big|1 + \e^{f_{1} (z)} - i \, \Im F_{1} (0) \big| \geq  \big|1 + \e^{f_{1} (z)} \big| - 1 \\ 
& \geq \big| \e^{f_{1} (z)} \big| - 1 = \e^{v (z) - \alpha} - 1 > \e^{y - \alpha} - 1 \geq \frac{1}{2} \, \e^{y - \alpha}\, .
\end{align*}
Taking radial limits and using \eqref{ko} and \eqref{claim}, we get:
\begin{displaymath}
m (\Im f^{\ast} > y ) \leq m (| F^{\ast}| >\e^{y - \alpha} / 2) \leq 4 a \, \e^{\alpha - y} \, .
\end{displaymath}
This  ends the proof of Lemma~\ref{key} with $C = \max (2, 4 a)$.
\end{proof}
%

\section{Application to the multidimensional case}

In this section, we apply Theorem~\ref{bet} and Theorem~\ref{third} to show that, for $N \geq 2$, the image of the symbol cannot determine the behavior of 
the approximation numbers, or rather of $\beta_N (C_\phi)$, of the associated composition operator $C_\phi \colon H^2 (\D^N) \to H^2 (\D^N)$.
\smallskip

Recall that for an operator $T \colon H_1 \to H_2$, we set:
\begin{equation} 
\beta^-_N (T) = \liminf_{n \to \infty} [a_{n} (T)]^{1 / n^{1 / N}} \quad \text{and} \quad 
\beta^+_N (T) = \limsup_{n \to \infty} [a_{n} (T)]^{1 / n^{1 / N}}\, ,
\end{equation} 
and write $\beta_N (T)$ when $\beta^-_N (T) = \beta^+_N (T)$.
\begin{theoreme}
For $N \geq 2$, there exist pairs of symbols $\Phi_1, \Phi_2 \colon \D^N \to \D^N$, such that $\Phi_1 (\D^N) = \Phi_2 (\D^N)$ and:
\begin{itemize}
\setlength\itemsep{2 pt}
\item [$1)$] $C_{\Phi_1}$ is not bounded, but $C_{\Phi_2}$ is compact, and even $\beta_N (C_{\Phi_2}) = 0$; 

\item [$2)$] $C_{\Phi_1}$ is bounded but not compact, so $\beta_N (C_{\Phi_1}) = 1$, and $C_{\Phi_2}$ is compact, with $\beta_N (C_{\Phi_2}) = 0$; 

\item [$3)$] $C_{\Phi_1}$ is compact, with $\beta^-_N (C_{\Phi_1}) > 0$ and $\beta^+_N (C_{\Phi_1}) < 1$, and $C_{\Phi_2}$ is compact, with 
$\beta_N (C_{\Phi_2}) = 0$;

\item [$4)$] $C_{\Phi_1}$ is compact, with $\beta_N (C_{\Phi_1}) = 1$, and $C_{\Phi_2}$ is compact, but with $\beta_N (C_{\Phi_2}) = 0$. 
\end{itemize}
\end{theoreme}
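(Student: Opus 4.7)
I will construct all four pairs as diagonal embeddings
\[
\Phi_j(z_1,\dots,z_N)=\bigl(\phi_j(z_1),\dots,\phi_j(z_1)\bigr),\quad j=1,2,
\]
for suitable self-maps $\phi_j\colon\D\to\D$, so that $\Phi_j(\D^N)=\{(v,\dots,v):v\in\phi_j(\D)\}$ and the image condition reduces to $\phi_1(\D)=\phi_2(\D)$. I enforce this by setting $\phi_2=\phi_1\circ\varphi$ where $\varphi$ is the surjective four-valent symbol from Theorem~\ref{third}: since $\varphi(\D)=\D$, one has $\phi_2(\D)=\phi_1(\varphi(\D))=\phi_1(\D)$. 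The factorization from the proof of Theorem~\ref{cusp} then applies and gives $C_{\Phi_j}=J\,C_{\phi_j}\,M$, with $M\colon H^2(\D^N)\to\mathcal{B}_{N-2}$ the diagonal restriction and $J\colon H^2(\D)\to H^2(\D^N)$ the canonical lift, so that
\[
a_n(C_{\Phi_j})\lesssim a_n\bigl(C_{\phi_j}\colon\mathcal{B}_{N-2}\to H^2(\D)\bigr)
\]
whenever the right-hand side is defined, equivalently (by Stegenga's theorem) whenever $m_{\phi_j}$ is $N$-Carleson. Since $C_{\phi_2}=C_\varphi\circ C_{\phi_1}$ on $\mathcal{B}_{N-2}$, the ideal property together with Theorem~\ref{third} yields $a_n(C_{\Phi_2})\lesssim e^{-n\varepsilon_n}$ for any prescribed $\varepsilon_n\to 0$; choosing $\varepsilon_n=1/\log n$ makes $n^{1-1/N}\varepsilon_n\to\infty$, and hence $\beta_N(C_{\Phi_2})=0$ uniformly in the four cases.

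\noindent\textbf{Choice of $\phi_1$ case by case.} For case $1)$ I take $\phi_1=\mathrm{Id}$; the inclusion $\mathcal{B}_{N-2}\hookrightarrow H^2(\D)$ is unbounded for $N\geq 2$, as witnessed by $f_k=\sum_{|\alpha|=k}z^\alpha$ whose diagonal trace has $H^2(\D)$-norm larger than its $H^2(\D^N)$-norm by a factor of order $k^{(N-1)/2}$; hence $C_{\Phi_1}$ is unbounded. For $\Phi_2$ one additionally requires $\delta(h)\lesssim h^{(N-1)/2}$ when invoking Theorem~\ref{bet}, so that $\rho_\varphi(h)\lesssim h^N$ and $C_\varphi\colon\mathcal{B}_{N-2}\to H^2(\D)$ is bounded. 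For case $2)$ I take $\phi_1=\lambda_{1/N}$; the lens measure $m_{\lambda_{1/N}}$ is critically $N$-Carleson ($\rho\approx h^N$), producing a bounded but non-compact $C_{\phi_1}\colon\mathcal{B}_{N-2}\to H^2(\D)$, and the reproducing-kernel computation at the end of the proof of Theorem~\ref{lens} at $\theta=1/N$ transfers non-compactness to $C_{\Phi_1}$. For cases $3)$ and $4)$ I take $\phi_1=\chi_\alpha$, a cusp-type univalent self-map with $|1-\chi_\alpha^{\ast}(e^{i\theta})|\approx(\log(1/|\theta|))^{-\alpha}$, which makes $\rho_{\chi_\alpha}(h)\approx\exp(-h^{-1/\alpha})$; balancing in Proposition~\ref{seconde} then gives
\[
a_n\bigl(C_{\chi_\alpha}\colon\mathcal{B}_{N-2}\to H^2(\D)\bigr)\lesssim e^{-c\,n^{1/(1+\alpha)}}.
\]
The choice $\alpha=N-1$ produces the rate $n^{1/N}$ and hence $\beta_N^+(C_{\Phi_1})\leq e^{-c}<1$ (case $3)$); any $\alpha>N-1$ produces a strictly slower rate and hence $\beta_N^+(C_{\Phi_1})=1$ (case $4)$).

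\noindent\textbf{Main obstacle.} The harder half of the argument is to prove the matching \emph{lower} bound $a_n(C_{\Phi_1})\gtrsim e^{-c'n^{1/(1+\alpha)}}$ that secures $\beta_N^-(C_{\Phi_1})>0$ in case $3)$ and $\beta_N^-(C_{\Phi_1})=1$ in case $4)$, since Proposition~\ref{seconde} is silent in that direction. The plan is to imitate the cusp-map computations of \cite{LIQURO}: test $C_{\Phi_1}^{\ast}$ on the diagonal reproducing kernels $K_{(a_k,\dots,a_k)}$ along a carefully spaced radial sequence $a_k\to 1$, use
\[
\frac{\|C_{\Phi_1}^{\ast}K_{(a,\dots,a)}\|^2}{\|K_{(a,\dots,a)}\|^2}=\left(\frac{1-|a|^2}{1-|\chi_\alpha(a)|^2}\right)^N\approx\bigl((1-|a|)\log^\alpha(1/(1-|a|))\bigr)^N,
\]
and convert the resulting almost-orthogonal family of $n$ vectors into a lower bound on $a_n(C_{\Phi_1})$ via the Bernstein-type characterization of approximation numbers. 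Fine-tuning the spacing of $(a_k)$ in terms of $\alpha$ and $N$ is where the genuine technical work sits.
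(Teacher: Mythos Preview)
Your constructions for cases 1) and 2) coincide with the paper's, and your uniform treatment of $\Phi_2$ via $\phi_2=\phi_1\circ\varphi$ is sound (once, in case~1, you impose $\rho_\varphi(h)\lesssim h^N$ so that $C_\varphi\colon\mathcal{B}_{N-2}\to H^2$ is bounded and Proposition~\ref{seconde} applies directly). The genuine difficulties lie in cases 3) and 4), and there your proposal has a real gap.

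First a logical slip: in case 4) you write that a slower upper bound $a_n\lesssim e^{-cn^{1/(1+\alpha)}}$ with $\alpha>N-1$ yields $\beta_N^+(C_{\Phi_1})=1$. It does not; an upper bound on $a_n$ can only give $\beta_N^+\leq 1$, which is automatic. The conclusion $\beta_N(C_{\Phi_1})=1$ in case~4, like $\beta_N^-(C_{\Phi_1})>0$ in case~3, is entirely a \emph{lower-bound} statement on $a_n(C_{\Phi_1})$. You acknowledge this under ``main obstacle'', but the plan there is too thin to carry the argument. The maps $\chi_\alpha$ are not constructed for $\alpha\neq 1$; and even granting their existence, your reproducing-kernel sketch does not obviously produce the needed $a_n(C_{\Phi_1})\gtrsim e^{-c'n^{1/N}}$. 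The easy lower bound via restriction, $a_n(C_{\Phi_1})\gtrsim a_n(C_{\chi_\alpha}\colon H^2\to H^2)$, is far too weak (for the standard cusp this gives only $\gtrsim e^{-cn/\log n}$), and the direct $H^2(\D^N)$ kernel argument you outline requires controlling the interpolation constants of diagonal sequences $(a_k,\dots,a_k)$ in a way that genuinely exploits the $N$-th power in the Gram matrix; this is a new computation, not the one from \cite{LIQURO}, and you have not carried it out.

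The paper avoids this altogether by \emph{abandoning} the purely diagonal form in cases 3) and 4). For 3) it sets $\Phi_1(z)=(\lambda_\theta(z_1),\lambda_\theta(z_1),z_3/2,\dots,z_N/2)$ with $\theta<1/N$, so that $C_{\Phi_1}=C_u\otimes C_{v_3}\otimes\cdots\otimes C_{v_N}$ is an honest tensor product; the lower bound $\beta_N^->0$ then follows from supermultiplicativity of singular numbers of tensor products together with the known one-variable bound $a_k(C_{\lambda_\theta})\gtrsim e^{-b\sqrt{k}}$, and the upper bound $\beta_N^+<1$ from a short combinatorial lemma on tensor products (Lemma~\ref{uno}). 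For 4) it sets $\Phi_1(z)=(\varsigma_\theta(z_1),0,\dots,0)$ with a Shapiro--Taylor map $\varsigma_\theta$, whose polynomial lower bound $a_n(C_{\varsigma_\theta})\gtrsim n^{-\theta/2}$ transfers to $C_{\Phi_1}$ via $C_{\varsigma_\theta}=QC_{\Phi_1}J$ and immediately gives $\beta_N=1$. In both cases the corresponding $\Phi_2$ is obtained by precomposing each nontrivial coordinate with a surjective symbol, as you do. The payoff of the paper's route is that both directions become one-line consequences of existing one-dimensional estimates, whereas your diagonal-$\chi_\alpha$ scheme leaves precisely the hard direction unproved.
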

\begin{proof}
Let $\sigma \colon \D \to \D$ be a surjective symbol such that $\rho_\sigma (h) \leq h^N \, \e^{- 2 / h^2}$ given by Theorem~\ref{bet}. 
By Proposition~\ref{seconde}, we have, with $\gamma = N - 2$:
\begin{displaymath} 
a_{n} (C_\sigma \colon \mathcal{B}_{\gamma} \to H^2) \lesssim \inf_{0 < h < 1} ({n^{(N - 1)/2}} \e^{- n h} + \e^{- 1 / h^2} ) \, \raise 1 pt \hbox{,} 
\end{displaymath} 
and, with $h = 1 / n^{1/3}$, we get $a_{n} (C_\sigma \colon \mathcal{B}_{\gamma} \to H^2) \lesssim \e^{- d \, n^{2 / 3}}$.  

We choose the exponent $2/3$ for fixing the ideas, but every exponent $\alpha > 1/2$, with $\alpha < 1$,  
(i.e. $a_{n} (C_\sigma \colon \mathcal{B}_{\gamma} \to H^2) \lesssim \e^{- d \, n^\alpha}$) would be suitable.
\smallskip

$1)$ We take $\Phi_1 (z_1, z_2, z_3, \ldots, z_N) = (z_1, z_1, \ldots, z_1)$. The composition operator $C_{\Phi_1}$ is not bounded because if 
$f_n (z_1, \ldots, z_N) = \big( \frac{z_1 + z_2}{2} \big)^n$, then 
$\| f_n \|_2^2 = 4^{- n} \sum_{k = 0}^n \binom{n}{k}^2 = 4^{- n} \binom{2 n}{n} \approx 1 / \sqrt{n}$, though 
$(C_{\Phi_1} f_n) (z_1, \ldots, z_N) = z_1^n$ and $\| C_{\Phi_1} f_n \|_2 = 1$. 

We define $\Phi_2$ by:
\begin{displaymath}
\Phi_2 (z_1, z_2, \ldots, z_N) = \big( \sigma (z_1), \sigma (z_1), \ldots, \sigma (z_1) \big) \, .
\end{displaymath}
Since $\sigma$ is surjective, we have $\Phi_2 (\D^N) = \Phi_1 (\D^N)$. 
Now, as in the proof of Theorem~\ref{cusp}, we have $C_{\Phi_2} = J C_\sigma M$, so:
\begin{displaymath}
a_n (C_{\Phi_2}) \leq a_{n} (C_\sigma \colon \mathcal{B}_{N - 2} \to H^2) \lesssim \e^{- d \, n^{2 / 3}} \, , 
\end{displaymath}
by the ideal property. Hence $[ a_n (C_{\Phi_2}) ]^{1 / n^{1 / N}} \lesssim \e^{- d \, n^{\frac{2}{3} - \frac{1}{N}}}$ and therefore 
$\beta_N (C_{\Phi_2}) = 0$ since $\frac{2}{3} - \frac{1}{N} > 0$.
\medskip

$2)$ We consider the lens map $\lambda = \lambda_{1/N}$ of parameter $1 / N$. 
We define:
\begin{displaymath}
\left\{ 
\begin{array}{l}
\Phi_1 (z_1, \ldots, z_N) = \big( \lambda (z_1), \lambda (z_1), \ldots, \lambda (z_1) \big) \smallskip \\
\Phi_2 (z_1, \ldots, z_N) = \big( \lambda [\sigma (z_1)], \lambda [\sigma (z_1)], \ldots, \lambda [\sigma (z_1)] \big) \, .
\end{array}
\right.
\end{displaymath}
Since $\sigma$ is surjective, we have $\Phi_1 (\D^N) = \Phi_2 (\D^N)$ and we saw in Theorem~\ref{lens} that $C_{\Phi_1}$ is bounded but not compact. 

On the other hand, we have the factorization $C_{\Phi_2} = J C_\sigma C_\lambda M$. Hence $C_{\Phi_2}$ is compact, and, as in $1)$, 
$\beta_N (C_{\Phi_2}) = 0$. 
\smallskip

$3)$ For this item, the map $\sigma$ does not suffice, and we will use another surjective symbol $s \colon \D \to \D$. 
By Theorem~\ref{bet}, there exists such a map $s$ with:
\begin{equation} \label{s1}
\rho_s (t) \leq t^2 \e^{- 2/t^2} 
\end{equation} 
and
\begin{equation} \label{s2}
\rho_s (t) \leq t \, \delta^{\,2} (t) 
\end{equation} 
for $t$ small enough, where $\delta \colon (0, 1) \to (0, 1)$ is a non-decreasing function such that $\delta (\eps_n) \leq \e^{- n \eps_n}$ and:
\begin{equation} \label{eps_n}
\eps_n = n^{- \frac{1}{4 N - 7}} \, . 
\end{equation} 
By the proof of Theorem~\ref{third}, \eqref{s2} implies that:
\begin{equation} \label{a_n (s)}
a_n (C_s) \leq \e^{- n \eps_n} \, .  
\end{equation} 
%

We also consider a lens map $\lambda = \lambda_\theta$, with parameter $\theta < 1 / N$, and we set:
\begin{displaymath}
\left\{ 
\begin{array}{l}
\displaystyle \Phi_1 (z_1, \ldots, z_N) = \Big( \lambda (z_1), \lambda (z_1), \raise -1,5 pt \hbox{$\displaystyle \frac{z_3}{2}$} , \ldots , 
\raise -1,5 pt \hbox{$\displaystyle \frac{z_N}{2}$ }\Big) \smallskip \\
\displaystyle \Phi_2 (z_1, \ldots, z_N) = \Big( \lambda [s (z_1)], \lambda [s (z_1)], \raise -1,5 pt \hbox{$\displaystyle \frac{s (z_3)}{2}$} , \ldots , 
\raise -1,5 pt \hbox{$\displaystyle \frac{s (z_N)}{2}$} \Big) \, .
\end{array}
\right.
\end{displaymath}
Since $s$ is surjective, we have $\Phi_1 (\D^N) = \Phi_2 (\D^N)$. 
\smallskip

a) Let us prove that $\beta_N^- (C_{\Phi_1}) > 0$ and $\beta_N^+ (C_{\Phi_1}) < 1$. 

Note that:
\begin{displaymath} 
C_{\Phi_1} = C_u \otimes C_{v_3}\otimes \cdots \otimes C_{v_N} \, ,
\end{displaymath} 
where $u \colon \D^2 \to \D^2$ is defined by $u (z_1, z_2) = \big( \lambda (z_1), \lambda (z_1) \big)$ and $v_j \colon \mathbb{D} \to \mathbb{\D}$ is 
defined by $v_{j} (z_j) = z_j /2$. In fact, if $f \in H^2 (\D^2)$ and $g_j \in H^2 (\D)$, $3 \leq j \leq N$, we have:
%
\begin{align*}
[C_{\Phi_1} & (f \otimes g_3  \otimes \cdots \otimes g_N )] (z_1, z_2, z_3, \ldots, z_N)  \\
& =  (f \otimes g_3  \otimes \cdots \otimes g_N ) \big( u (z_1, z_2), v_3 (z_3), \ldots, v_N (z_N) \big) \\ 
& = f [ \lambda (z_1), \lambda (z_1) ] \, g_3 [v_3 (z_3)] \cdots g_N [v_N (z_N)] \\
& = (C_u f) (z_1, z_2) \, (C_{v_3} g_3) (z_3)  \cdots (C_{v_N} g_N) (z_N) \\
& = [(C_u \otimes C_{v_3} \otimes \cdots \otimes C_{v_N}) (f \otimes g_3 \otimes \cdots \otimes g_N)] (z_1, z_2, z_3, \ldots, z_N) \, ,
\end{align*}
hence the result since $H^2 (\D^2) \otimes H^2 (\D) \otimes \cdots \otimes H^2 (\D)$ is dense in $H^2 (\D^N)$. That proves in particular that $C_{\Phi_1}$ 
is compact since $C_u$ and $C_{v_3}, \ldots, C_{v_N}$ are (by Theorem~\ref{lens} for $C_u$).

By the supermultiplicativity of singular numbers of tensor products (see \cite[Lemma~3.2]{DHL}), it ensues that:
\begin{displaymath} 
a_{n^N} (C_{\Phi_1}) \geq a_{n^2} (C_u) \prod_{j = 3}^N a_{n} (C_{v_j}) = a_{n^2} (C_u) \, \Big( \frac{1}{2} \Big)^{n (N - 2)} \, .
\end{displaymath} 
By \cite[Remark at the end of Section~4]{DHL}, we have $a_{n^2} (C_u) \gtrsim \e^{ - b n}$ for some positive constant $b = b (\theta)$. Indeed, if 
$J = J_2 \colon H^2 (\D) \to H^2 (\D^2)$ is the canonical injection defined by $(J h) (z_1, z_2) = h (z_1)$ and $Q \colon H^2 (\D^2) \to H^2 (\D)$ is defined 
by $(Qf) (z_1) = f (z_1, 0)$, we have $C_\lambda = Q C_u J$. Hence $a_k (C_u) \gtrsim a_k (C_\lambda) \gtrsim \e^{- b \sqrt{k}}$. 

Therefore we get:
\begin{displaymath} 
a_{n^N} (C_{\Phi_1}) \gtrsim \e^{ - c n} 
\end{displaymath} 
for some positive constant depending only on $\theta$ and $N$. It follows that $\beta^-_N (C_{\Phi_1}) > 0$.
\medskip

To see that $\beta^+_N (C_{\Phi_1}) < 1$, we need the following lemma, whose proof is postponed.
\goodbreak

\begin{lemme} \label{uno} 
Let $S \colon H_1 \to H_1$ and $T \colon H_2 \to H_2$ be two operators between Hilbert spaces and $A, B$ a pair of positive numbers. Then, 
whenever:
\begin{displaymath} 
 a_{[n^A]}(S) \leq \e^{- c n} \quad \text{and} \quad  a_{[n^B]} (T) \leq \e^{- c n} \, , 
\end{displaymath} 
where $[\, . \,]$ stands for the integer part, we have, for some constant integer $M = M (A, B) > 0$:
\begin{displaymath} 
a_{M \, [n^{A + B}]} (S \otimes T) \leq \e^{- c n} \, .
\end{displaymath} 
\end{lemme} 
\smallskip

Let $S = C_u$ and $T = C_{v_3} \otimes \cdots \otimes C_{v_N}$. For $c$ small enough, we have $a_{n^{N - 2}} (T) \leq C \, (1/2)^n \leq \e^{- c n}$ and, 
using \eqref{mino lens}, $a_{n^2} (S) \leq \e^{- d n} \leq \e^{- c n}$. Hence, with $A = 2$, $B = N - 2$, Lemma~\ref{uno} gives:
\begin{displaymath} 
a_{M n^N} (C_{\Phi_1}) \lesssim \e^{- c n} \, .
\end{displaymath} 
Therefore $\beta^+_N (C_{\Phi_1}) \leq \e^{- c / M^{1/N}} < 1$. 
\smallskip

b) Define $\Psi \colon \D^N \to \D^N$ by:
\begin{displaymath} 
\Psi (z_1, z_2, z_3, \ldots, z_N) = \big( s (z_1), s (z_1), s (z_3), \ldots, s (z_N) \big) \,.
\end{displaymath} 

If $\tau_1 \colon \D^2 \to \D^2$ is defined by $\tau_{1} (z_1, z_2) = \big( s (z_1), s (z_1) \big)$ and the map 
$\tau_2 \colon \D^{N - 2} \to \D^{N - 2}$ by $\tau_{2} (z_3, \ldots, z_N) = \big( s (z_3), \ldots, s (z_N) \big)$, we have:
\begin{displaymath} 
C_\Psi = C_{\tau_1} \otimes C_{\tau_2} \, .
\end{displaymath} 
As in the proof of Theorem~\ref{cusp}, we have the factorization:
\begin{displaymath} 
\tau_1 \colon H^{2} (\D^2) \mathop{\longrightarrow}^M \mathcal{B}_{0} = {\mathcal B}^2 
\mathop{\longrightarrow}^{C_s} H^{2} (\D) \mathop{\longrightarrow}^J H^{2} (\D^2) \, .
\end{displaymath} 
Hence $a_n (C_{\tau_1}) \leq \| M \|\, \| J \|\, a_n (C_s \colon \mathcal{B}^2 \to H^2)$. 

By Proposition~\ref{seconde}, we have:
\begin{displaymath} 
a_n (C_s \colon \mathcal{B}^2 \to H^2) \lesssim \inf_{0 < h < 1} \bigg( \sqrt{n} \, \e^{- n h} + \sup_{0 < t \leq h} \sqrt{\frac{\rho_s (t)}{t^2}} \, \bigg) 
\, ;
\end{displaymath} 
so \eqref{s1} implies that $a_n (C_s \colon \mathcal{B}^2 \to H^2) \lesssim \inf_{0 < h < 1} ( \sqrt{n} \, \e^{- n h} + \e^{- 1 / h^2})$ and, taking 
$h = n^{- 1/3}$, we get, with some $c$ small enough:
\begin{displaymath} 
a_n (C_s \colon \mathcal{B}^2 \to H^2) \lesssim \e^{- c n^{2 / 3}} \, .
\end{displaymath} 
It follows that $a_n (C_{\tau_1}) \lesssim \e^{- c \, n^{2 / 3}}$ and hence:
\begin{equation} \label{tau1}
a_{[n^{3 / 2}]} (C_{\tau_1}) \lesssim \e^{- c \, n} \, .
\end{equation} 

On the other hand, \cite[Theorem~5.5]{BLQR} says that:
\begin{displaymath} 
a_n (C_{\tau_2}) \leq 2^{N - 3} \| C_s\|^{N - 2} \inf_{n_3 \cdots n_N \leq n} \big( a_{n_3} (C_s) + \cdots + a_{n_N} (C_s) \big) \, .
\end{displaymath} 
Taking $n_3 = \cdots = n_N = n^{\frac{1}{N - 2}}$, we get, using \eqref{a_n (s)}:
\begin{displaymath} 
a_n (C_{\tau_2}) \leq K^N N \, \exp \Big( - n^{\frac{1}{N  - 2}} \, \eps_{n^{\frac{1}{N  - 2}}} \Big)\, . 
\end{displaymath} 
Using \eqref{eps_n}, that gives:
\begin{displaymath} 
a_n (C_{\tau_2}) \, \lesssim \exp \big( - n^{\frac{1}{N - 2} ( 1 - \frac{1}{4N  - 7} ) } \big) = 
\exp \big( - n^{\frac{4}{4 N - 7}} \big) \, ,
\end{displaymath} 
or:
\begin{equation} \label{tau2}
a_{\big[n^{N - \frac{7}{4}} \big]} (C_{\tau_2}) \lesssim \e^{ - n} \leq \e^{- c n} \, . 
\end{equation} 

Now, \eqref{tau1} and \eqref{tau2} allow to use Lemma~\ref{uno} with $A = 3/2$ and $B = N - 7/4$, and we get:
\begin{displaymath} 
a_{M \, \big[n^{N - \frac{1}{4}} \big] } (C_\Psi) \lesssim \e^{- c n} \, . 
\end{displaymath} 
Equivalently:
\begin{displaymath} 
a_k (C_\Psi) \lesssim \exp \big( - c' k^{\frac{4}{4 N - 1}} \big) 
\end{displaymath} 
and:
\begin{displaymath} 
\big( a_k (C_\Psi) \big)^{1/ k^{1 / N}} \lesssim \exp \big( - c' k^{\frac{4}{4 N - 1} - \frac{1}{N}} \big) 
= \exp \big( - c' k^{\frac{1}{N (4 N  - 1)}} \big) \, ,
\end{displaymath} 
which gives $\beta_N (C_\Psi) = 0$.

To end the proof, it suffices to remark that $C_{\Phi_2} = C_\Psi \circ C_{\Phi_1}$, since $\Phi_2 = \Phi_1 \circ \Psi$, and hence 
$\beta_N^+ (C_{\Phi_2}) \leq \beta_N^+ (C_\Psi) = 0$, so $\beta_N (C_{\Phi_2}) = 0$.
\smallskip

$4)$ We use a Shapiro-Taylor map. This one-parameter map $\varsigma_\theta\,$, $\theta > 0$, was introduced by J. Shapiro and P. Taylor in 1973 (\cite{SHTA}) 
and was further studied, with a slightly different definition, in \cite[Section~5]{JFA}. J. Shapiro and P. Taylor proved that 
$C_{\varsigma_\theta} \colon H^2 \to H^2$ is always compact, but is Hilbert-Schmidt if and only if $\theta > 2$. Let us recall their definition. 

For $0 < \eps < 1$, we set $V_\eps = \{ z\in \C \, ; \ \Re z > 0 \text{ and } |z | < \eps \}$. For $\eps = \eps_\theta > 0$ small enough, one can define: 
\begin{displaymath} 
f_\theta (z) = z (- \log z )^\theta ,
\end{displaymath} 
for $z \in V_\eps$, where $\log z$ will be the principal determination of the logarithm. Let now $g_\theta$ be the conformal mapping from $\D$ onto $V_\eps$, 
which maps $\T = \partial \D$ onto $\partial V_\eps$, defined by $g_\theta (z) = \eps\, \phi_0 (z)$, where $\phi_0$ is given by:
\begin{displaymath} 
\phi_0 (z) = \frac{\displaystyle \Big( \frac{z - i}{i z - 1} \Big)^{1/2} - i} {\displaystyle - i \, \Big( \frac{z - i}{i z - 1} \Big)^{1/2} + 1} \, \cdot
\end{displaymath} 
Then, we define:
\begin{displaymath} 
\varsigma_\theta = \exp ( - f_\theta \circ g_\theta) .
\end{displaymath} 

We proved in \cite[Section~4.2]{LIQURO} (though it is not sharp) that:
\begin{equation} 
a_n (C_{\varsigma_\theta}) \gtrsim \frac{1}{n^{\theta/2}} \, \cdot
\end{equation} 

We define $\Phi_1 \colon \D^N \to \D^N$ as:
\begin{equation} 
\Phi_1 (z_1, z_2, \ldots, z_N) = \big( \varsigma_\theta (z_1), 0, \ldots, 0\big) \, .
\end{equation} 

If $J = J_N \colon H^2 (\D) \to H^2 (\D^N)$ is the canonical injection defined by $(Jh) (z_1, \ldots, z_N) = h (z_1)$ and 
$Q = Q_N \colon H^2 (\D^N) \to H^2 (\D)$ is defined by $(Q f) (z_1) = f (z_1, 0, \ldots, 0)$, then $C_{\Phi_1} = J C_{\varsigma_\theta} Q$; hence 
$C_{\Phi_1}$ is compact. On the other hand, we also have $Q C_{\Phi_1} J = C_{\varsigma_\theta}$, which implies that 
$a_n (C_{\Phi_1}) \gtrsim a_n (C_{\varsigma_\theta}) \gtrsim n^{- \theta/2}$. It follows that:
\begin{displaymath} 
\beta_N (C_{\Phi_1}) \geq \lim_{n \to \infty} (n^{- \theta/2})^{1/ n^{1/ N}} = 1 \, ,
\end{displaymath} 
and hence $\beta_N (C_{\Phi_1}) = 1$.

Now, if:
\begin{displaymath} 
\Phi_2 (z_1, \ldots, z_N) = \big( \varsigma_\theta [\sigma (z_1)], 0, \ldots, 0\big) \, , 
\end{displaymath} 
since $\sigma$ is surjective, we have $\Phi_1 (\D^N) = \Phi_2 (\D^N)$. Moreover, we have  
$C_{\Phi_2} = J C_{\varsigma_\theta \circ \sigma} Q = J C_\sigma C_{\varsigma_\theta} Q$, 
so $a_n (C_{\Phi_2}) \lesssim a_n (C_\sigma)$. Since $\rho_\sigma (h) \leq h^{N + 1} \, \e^{- 2 / h^2}$, Proposition~\ref{second} gives, with 
$h = 1/ n^{1/3}$:
\begin{displaymath} 
a_n (C_\sigma) \lesssim \e^{- c n^{2/3}} \, ,
\end{displaymath} 
so $[a_n (C_{\Phi_2})]^{1/n^{1/N}} \lesssim \exp ( - c \, n^{\frac{2}{3} - \frac{1}{N}})$ and $\beta_N (C_{\Phi_2}) = 0$.
\end{proof}
\begin{proof} [Proof of Lemma~\ref{uno}] 
In \cite{DHL}, we observed that the singular numbers of $S \otimes T$ are the non-increasing 
rearrangement of the numbers $s_j t_k$, where $s_j$ and $t_k$ denote respectively the $j$-th and the $k$-th singular number of $S$ and $T$.  We can assume 
$s_1 = t_1 = 1$. Using this observation, we will majorize the number of pairs $(j, k)$ such that $s_j t_k > \e^{- c n}$. Let $(j, k)$ be such a pair. Since 
$s_j \leq s_1 = 1$, we have $t_k \geq \e^{- c n}$ so that $k \leq [n^B] \leq n^B$. Hence, for some $2 \leq l \leq n$, we have 
$(l - 1)^B < k \leq l^B$. Then, due to the assumption on $T$, $t_k < \e^{- c (l - 1)}$ and $s_j \geq \e^{- c n} t_{k}^{- 1} \gtrsim \e^{- c (n - l + 1)}$, implying 
that $j \lesssim (n - l + 1)^{A}$, thanks to the assumption on $S$. As a consequence, since the number of integers $k$ such that $(l - 1)^B < k \leq l^B$ is 
dominated by $l^{B - 1}$, the number $\nu_n$ of pairs $(j, k)$ such that $s_j t_k > \e^{- c n}$ is dominated by:
\begin{displaymath} 
\sum_{l = 1}^n (n - l + 1)^{A} l^{B - 1} \sim n^{A+B} \int_{0}^1 t^{A} (1 - t)^{B} \, dt \, ,
\end{displaymath} 
by a Riemann sum argument. Next, let $M \in \N$ big enough to have:
\begin{displaymath} 
\sum_{l = 1}^n (n - l + 1)^A l^{B - 1} \leq M n^{A + B} - 1 \, , \quad \text{for all } n \, .
\end{displaymath} 
By definition, $a_{M [n^{A + B}]} (S \otimes T) \leq a_{\nu_{n} + 1} (S\otimes T) \leq \e^{- c n}$, giving the result.
\end{proof}
\medskip

\noindent {\bf Acknowledgement.} This paper was made when the two first-named authors visited the University of Sevilla in February 2018. It is their 
pleasure to thank this university and all colleagues therein for their warm welcome. 

The third-named author is partially supported by the project MTM2015-63699-P (Spanish MINECO and FEDER funds).


\smallskip\goodbreak

{\footnotesize
Daniel Li \\ 
Univ. Artois, Laboratoire de Math\'ematiques de Lens (LML) EA~2462, \& F\'ed\'eration CNRS Nord-Pas-de-Calais FR~2956, 
Facult\'e Jean Perrin, Rue Jean Souvraz, S.P.\kern 1mm 18 
F-62\kern 1mm 300 LENS, FRANCE \\
daniel.li@euler.univ-artois.fr
\smallskip

Herv\'e Queff\'elec \\
Univ. Lille Nord de France, USTL,  
Laboratoire Paul Painlev\'e U.M.R. CNRS 8524 \& F\'ed\'eration CNRS Nord-Pas-de-Calais FR~2956 
F-59\kern 1mm 655 VILLENEUVE D'ASCQ Cedex, FRANCE \\
Herve.Queffelec@univ-lille.fr
\smallskip
 
Luis Rodr{\'\i}guez-Piazza \\
Universidad de Sevilla, Facultad de Matem\'aticas, Departamento de An\'alisis Matem\'atico \& IMUS,  
Calle Tarfia s/n \\ 
41\kern 1mm 012 SEVILLA, SPAIN \\
piazza@us.es
}

\end{document}